\newcommand{\Cc}{\mathbb{C}}
\newcommand{\Rr}{\mathbb{R}}
\newcommand{\Ss}{\mathbb{S}}
\newcommand{\GG}{\mathfrak{g}}
\newcommand{\NN}{\mathfrak{n}}
\newcommand{\GL}{\mathfrak{gl}}
\newcommand{\SL}{\mathfrak{sl}}
\newcommand{\SU}{\mathfrak{su}}
\providecommand{\U}{\mathfrak{u}}
\renewcommand{\U}{\mathfrak{u}}
\newcommand{\SO}{\mathfrak{so}}
\newcommand{\SP}{\mathfrak{sp}}
\DeclareMathOperator{\Tr}{Tr}
\DeclareMathOperator{\ad}{ad}
\DeclareMathOperator{\Sym}{Sym}
\newcommand{\trans}{\dagger}
\theoremstyle{plain} 
\newtheorem{thm}{Theorem}[]
\newtheorem{cor}{Corollary}[]
\newtheorem{prop}{Proposition}[]
\theoremstyle{definition}
\newtheorem{defn}{Definition}[]
\newtheorem{rem}{Remark}[] 
\newtheorem{ass}{Assumption}[]
\title{Lie--Poisson methods for isospectral flows}
\date{\today, Communicated by Arieh Iserles}
\author{Klas Modin}
\author{Milo Viviani}
\address{Department of Mathematical Sciences, Chalmers University of Technology and University of Gothenburg, SE-412 96 Gothenburg, Sweden}
\email{klas.modin@chalmers.se, viviani@chalmers.se}
\begin{document}

\maketitle

\begin{abstract}
The theory of isospectral flows comprises a large class of continuous dynamical systems, particularly integrable systems and Lie--Poisson systems.
Their discretization is a classical problem in numerical analysis.
Preserving the spectra in the discrete flow requires the conservation of high order polynomials, which is hard to come by.
Existing methods achieving this are complicated and usually fail to preserve the underlying Lie--Poisson structure.
Here we present a class of numerical methods of arbitrary order for Hamiltonian and non-Hamiltonian isospectral flows, which preserve both the spectra and the Lie--Poisson structure.
The methods are surprisingly simple, and avoid the use of constraints or exponential maps.
Furthermore, due to preservation of the Lie--Poisson structure, they exhibit near conservation of the Hamiltonian function.
As an illustration, we apply the methods to several classical isospectral flows.
\\[1ex]
\textbf{Keywords:} isospectral flow, Lie--Poisson integrator, symplectic Runge--Kutta methods, Toda flow, generalized rigid body, Chu's flow, Bloch--Iserles flow, Euler equations, point-vortices
\\[1ex]
\textbf{MSC 2010:} 37M15, 65P10, 37J15, 53D20, 70H06
\end{abstract}

\tableofcontents

\newpage
\section{Introduction}

Lie--Poisson systems and isospectral flows are two well-studied classes of dynamical systems. 
The former appear as Poisson reductions of Hamiltonian systems for which the configuration and symmetry space is a Lie group (see the monograph \cite{MaRa1999} and references therein). 
The classical example is the free rigid body as viewed by Poincaré~\cite{Po1901}.
The latter, isospectral flows, appear as Lax formulations of integrable systems (see the survey papers \cite{Wa1984,Ch1994,To2013} and references therein).
The classical example is the Toda lattice as viewed by Flaschka \cite{To1970,Fl1974}.

The study of numerical methods for the two classes of systems are by now classical subjects in numerical analysis.
The motivation for such schemes came through the strong connection between matrix factorizations in numerical linear algebra and isospectral flows (see the survey papers \cite{Ch2008b,Mo2017}).
This was initiated by the remarkable discovery that the iterative $QR$-algorithm for computing eigenvalues is a discretization of the (non-periodic) Toda flow \cite{Sy1982,DeNaTo1983}.

The general form of an isospectral flow is
\begin{equation}\label{iso_gen}
	\dot W = [B(W),W], \quad W\in S\subset\GL(n,\Cc).
\end{equation}
Here, $[\cdot,\cdot]$ denotes the matrix commutator, $S$ is a linear subspace of the Lie algebra $\GL(n,\Cc)$, and the function $B\colon S\to \NN(S)$ maps into the normalizer algebra $\NN(S)$ (see Section~\ref{sec:reduction} below for details).
The most studied setting is when $S=\Sym(n,\Rr)$ is the space of symmetric real matrices, for which the normalizer is the Lie algebra of skew-symmetric real matrices $\NN(S) = \SO(n)$.
Another setting is when $S=\GG$ is a Lie subgroup of $\GL(n,\Cc)$, for which the normalizer is the subalgebra itself $\NN(S) = \GG$.
 
Let us now discuss the connection between isospectral flows and Lie--Poisson systems.
The predominant example connecting the two is Manakov's $n$-dimensional rigid body \cite{man}.

Recall that a Lie--Poisson system evolves on the dual~$\GG^*$ of a Lie algebra~$\GG$.
Given a Hamiltonian function $H$ on $\GG^*$, the flow $W(t)\in\GG^*$ is given by
\begin{equation}\label{eq:gen_LP_abstract}
	\dot W = \ad^*_{d H(W)}(W),
\end{equation}
where the operator $\ad^*$ is defined by
\begin{equation}\label{eq:adstar_def}
	\langle \ad^*_{U}(W),V\rangle = \langle W,[U,V]\rangle\qquad \forall\, U,V\in \GG.
\end{equation}

Without loss of generality we may assume that $\GG$ is a subalgebra of $\GL(n,\Cc)$.
To identify $\GL(n,\Cc)^*$ with $\GL(n,\Cc)$ we use the Frobenius inner product 
\begin{equation*}
	\langle W,V\rangle = \Tr(W^\trans V),
\end{equation*}
where $W^\trans$ denotes the conjugate transpose.
In this way we also identify $\GG^*$ with the subspace $\GG\subset \GL(n,\Cc)$.
Next we extend the Hamiltonian to all of $\GL(n,\Cc)$ by taking it to be constant on the affine spaces given by translations of the orthogonal complement of $\GG$.
Then $d H$ corresponds to $\nabla H$.
From the definition \eqref{eq:adstar_def} and the identification of $\GG^*$ with $\GG$ we get
\begin{equation*}
	\ad^*_W(M) = \Pi\,[W^\trans,M],
\end{equation*}
where $\Pi$ is the orthogonal projection $\GL(n,\Cc)\to\GG$.
We thus arrive at an explicit formulation of the Lie--Poisson system~\eqref{eq:gen_LP_abstract}, namely
\begin{equation}\label{eq:gen_LP}
	\dot W = \Pi\,[\nabla H(W)^\trans,W].
\end{equation}

Now, the key observation is that if the representation of $\GG$ as a subalgebra of $\GL(n,\Cc)$ is closed under conjugate transpose, then equation~\eqref{eq:gen_LP} becomes the isospectral flow
\begin{equation}\label{eq:ham_isospectral}
	\dot W = [\nabla H(W)^\trans,W].
\end{equation}
Such a representation is possible if and only if $\GG$ is a \emph{reductive Lie algebra} (see Section~2--3 below for details).
Thus we arrive at the statement that Lie--Poisson systems for any reductive 
Lie algebra can be viewed as isospectral flows.
Recall that most classical Lie algebras are reductive, for example $\GL(n,\Cc), \GL(n,\Rr), \SL(n,\Cc)$, $\SL(n,\Rr)$, $\U(n)$, $\SU(n)$, $\SO(n)$, and $\SP(n)$. 

An interesting consequence of equation~\eqref{eq:ham_isospectral} is that whenever the function $B(W)$ in the isospectral flow \eqref{iso_gen} can be written as $B(W) = \nabla H(W)^\trans$, then it can be extended to a Lie--Poisson system on $\GL(n,\Cc)$ (or possibly a smaller reductive Lie algebra containing $S$).
Indeed, just extend the Hamiltonian function $H$ to be constant of the affine fibres orthogonal to $S$.
In this way we obtain an extended system foliated into invariant affine subspaces generated by $S$. 
The Toda flow is an example where this construction is possible (see Section~\ref{sec:todalattice} below).

The key feature of isospectral flows is, of course, that the eigenvalues of $W$ are preserved. 
Equivalently, given any analytic function $f$ extended to matrices, the function
\begin{equation*}
	F(W) = \Tr(f(W))
\end{equation*}
is a first integral regardless of the choice of $B(W)$ in \eqref{iso_gen}.
From the perspective of Lie--Poisson systems \eqref{eq:ham_isospectral}, this means that $F(W)$ is a \emph{Casimir function} associated with the Lie--Poisson structure~\eqref{eq:adstar_def}.
Although there are infinitely many Casimir functions, only a finite number of them can be functionally independent.

In this paper, we develop spectral preserving numerical methods for flows of the form~\eqref{iso_gen} which, in the case of Hamiltonian isospectral flows~\eqref{eq:ham_isospectral}, also preserves the Lie--Poisson structure.
There already exist at least four ways to achieve this:
\begin{itemize}
	\item If the Hamiltonian can be written as a sum of explicitly integrable Hamiltonians one can use \emph{splitting method} (see \cite{McQu2002} and references therein).

	\item The Lie--Poisson system on $\GG^*\simeq \GG$ can be extended to a constrained canonical Hamiltonian system on $T^*G\simeq TG\subset T\mathrm{GL}(n,\Cc)$.
	One can then use the symplectic RATTLE method (or higher order versions of it) for the constrained system (see \cite{Ja1996,McMoVeWi2013}).

	\item One can use symplectic Lie group methods on $T^*G$ as developed in~\cite{bogmar}. 
	These methods rely on an invertible mapping between the Lie algebra and (an identity neighbourhood of) the Lie group, such as the exponential map (works in general) or the Cayley map (works for quadratic Lie groups).

	\item One can, in some cases, use \emph{collective symplectic integrators}, which rely on Clebsch variables originating from a Hamiltonian action of $G$ on a symplectic vector space (see \cite{McMoVe2014c,McMoVe2014d} for details).
\end{itemize}
Compared to these methods our approach is:
(i) simpler since the algorithms are formulated directly on the algebra $\GG\subset \GL(n,\Cc)$;
(ii) free of constraints;
(iii) free of algebra-to-group maps, such as the exponential or Cayley map;
(iv) generic as they apply to any isospectral Hamiltonian flow.
Furthermore, through the framework of Poisson reduction (cf.~\cite{MaRa1999}) our methods are directly related to classical symplectic Runge--Kutta methods (or partitioned symplectic Runge--Kutta methods).
Therefore they merit the designation \emph{Isospectral Symplectic Runge--Kutta} (IsoSyRK) methods.

The paper is organized as follows. 
In Section~\ref{sec:main_results} we give the definitions of the new methods and we state our main results.
In Section~\ref{sec:reduction} we develop a discrete reduction theory for isospectral Lie--Poisson flows.
These results are instrumental in Section~\ref{sec:isosyrk}, where we specialize our construction to symplectic Runge--Kutta methods.
All numerical examples are given in Section~\ref{sec:examples}.

\medskip

\noindent\textbf{Acknowledgements.} 
The authors were supported by EU Horizon 2020 grant No 691070, by the Swedish Foundation for International Cooperation in Research and Higher Eduction (STINT) grant No PT2014-5823, by the Swedish Foundation for Strategic Research grant ICA12-0052, and by the
Swedish Research Council (VR) grant No 2017-05040.
We would like to thank the reviewers for helpful comments.



\section{Main results}\label{sec:main_results}


A Runge--Kutta method is defined by its \emph{Butcher tableau} (cf.~\cite{hlw})
\begin{equation}\label{eq:butchertab}
	\renewcommand\arraystretch{1.2}
	\begin{array}
	{l|l}
	\mathbf{c}  & \mathbf{A} \\
	\hline
	& \mathbf{b}^T  
	\end{array}
\end{equation}
where $\mathbf{A}\in \Rr^{s\times s}$ and $\mathbf{b},\mathbf{c} \in \Rr^s$.
Furthermore, if 
\begin{equation}\label{eq:sympl_condition}
	b_ia_{ij}+b_ja_{ji}= b_ib_j,   
\end{equation}
for $i,j=1,\ldots,s$, then the corresponding Runge--Kutta method is symplectic when applied to canonical Hamiltonian systems on $\Rr^{2n}$ \cite{Sa1988}.
However, directly applying a symplectic Runge--Kutta method to the Hamiltonian isospectral flow~\eqref{eq:ham_isospectral} does \emph{not} yield a Poisson integrator.
Nor does it, in general, preserve the isospectral property, as is well known.

\begin{defn}[IsoSyRK]\label{def:isosyrk}
	Given a Butcher tableau~\eqref{eq:butchertab}
	fulfilling the symplectic condition~\eqref{eq:sympl_condition},
	the corresponding \emph{Isospectral Symplectic Runge--Kutta method} for the flow \eqref{iso_gen} is the map
	\begin{equation*}
		\Phi_{h}\colon\GL(n,\Cc)\ni W_k \longmapsto W_{k+1}\in \GL(n,\Cc)
	\end{equation*}
	defined by
	\[
		\left. \begin{aligned} 
		X_i &= - \Big(W_k+\sum_{j=1}^s a_{ij} X_j\Big)h B(\widetilde{W}_i)  
		\\
		Y_i &=  hB(\widetilde{W}_i)\Big(W_k+\sum_{j=1}^s a_{ij} Y_j\Big)  
		\\
		K_{ij} &= hB(\widetilde{W}_i)\Big(\sum_{j'=1}^s (a_{ij'}X_{j'}+a_{jj'}K_{ij'})\Big)
		\\
		\widetilde{W}_i&=W_k+\sum_{j=1}^s a_{ij} (X_j+Y_j+K_{ij})  \hspace{1cm}  & 
		\\
		W_{k+1} &= W_k + \sum_{i=1}^sb_i[h B(\widetilde{W}_i),\widetilde{W}_i] ,
		\end{aligned}\right.
	\]
	for $i,j=1,\ldots,s$, where $h>0$ denotes the step size.
\end{defn} 

\begin{thm}\label{thm:main_isosyrk}
	The method in Definition~\ref{def:isosyrk} fulfills the following properties:
	\begin{enumerate}
		\item It has the same order as the underlying Runge--Kutta method.

		\item It is isospectral; for any analytic function $f$ extended to matrices
		\begin{equation*}
			\Tr(f(W_{k+1})) = \Tr(f(W_k)).
		\end{equation*}

		\item It is equivariant with respect to Lie algebra morphisms; \linebreak if $\mathcal A\colon \GL(n,\Cc)\to \GL(n,\Cc)$ is a linear invertible mapping fulfilling for all $X,Y\in\GL(n,\Cc)$
		\begin{equation*}
			\mathcal A[X,Y] = [\mathcal A X,\mathcal A Y],
		\end{equation*}
		then the following diagram commutes
		\[
		\begin{tikzcd}
			W_{k} \arrow[r,"\mathcal A"]\arrow[d,"\Phi_{h B}"] & W_k'\arrow[d,"\Phi_{h \mathcal A\circ B\circ\mathcal A^{-1}}"] \\
			W_{k+1} \arrow[r,"\mathcal A"] & W'_{k+1}
		\end{tikzcd}
		\]

		\item It is a Lie--Poisson integrator if the isospectral flow is Hamiltonian, i.e., of the form \eqref{eq:ham_isospectral}. Furthermore, if the isospectral flow is Hamiltonian, it is equivariant with respect to linear Lie--Poisson isomorphisms $\mathcal B\colon \GL(n,\Cc)^*\to \GL(n,\Cc)^*$. 

		\item It restricts to a Lie--Poisson integrator for any Lie subalgebra $\GG\subset \GL(n,\Cc)$ defined by
		\begin{equation}\label{eq:quadratic_alg1}
			W\in \GG \iff W^\dagger J + J W = 0,
		\end{equation}
		where $J^2 = c I $ for some $c\in \mathbb{R}\backslash \{ 0\}$.

		\item It restricts to a Lie--Poisson integrator for any Lie subalgebra given by arbitrary intersections of $\GL(n,\Rr)$, $\SL(n,\Cc)$, and Lie algebras of the form~\eqref{eq:quadratic_alg1}.\label{item:intersections}

		\item It extends to a Lie--Poisson integrator for direct products of Lie algebras of the form in item \eqref{item:intersections}.

		\item It restricts to an isospectral integrator on the orthogonal complement $\GG^\bot \subset \GL(n,\Cc)$ of any Lie algebra $\GG$ of the form in item \eqref{item:intersections}, provided that $B$ restricts to a mapping $B\colon\GG^\bot\to \GG$.
	\end{enumerate}
\end{thm}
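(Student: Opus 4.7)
The plan is to address all eight items from a single perspective: the IsoSyRK scheme in Definition~\ref{def:isosyrk} is the Poisson reduction, by the cotangent lift of conjugation on $\mathrm{GL}(n,\Cc)$, of a symplectic partitioned Runge--Kutta method applied to a canonical Hamiltonian lift of the isospectral flow. The three blocks of auxiliary variables $X_i$, $Y_i$, and $K_{ij}$ are then to be recognised as the reduced stage slopes for the group variable, its inverse, and their bilinear coupling, and the symplecticity condition~\eqref{eq:sympl_condition} is precisely what makes the left- and right-multiplication parts of the commutator $[B,W]$ consistent under reduction. With this identification set up (the content of Section~\ref{sec:reduction}), item~(1) is inherited from the order of the underlying Butcher tableau, and item~(4) follows from the general fact that symplectic partitioned RK schemes descend to Poisson integrators on reduced spaces.

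For isospectrality (item~2) the plan is to exhibit an explicit conjugation $W_{k+1}=\Omega\,W_k\,\Omega^{-1}$. Introducing $\hat X_i := W_k+\sum_j a_{ij}X_j$ and $\hat Y_i := W_k+\sum_j a_{ij}Y_j$, the defining relations read $X_i = -\hat X_i\,hB(\widetilde W_i)$ and $Y_i = hB(\widetilde W_i)\,\hat Y_i$, so that $\hat Y_i$ and $\hat X_i^{-1}$ play the role of RK stages of a group curve and its inverse, while $K_{ij}$ absorbs the bilinear mismatch between them. The symplecticity condition is then exactly what permits assembling these pieces into a single factor $\Omega$ for which $W_{k+1}\,\Omega = \Omega\,W_k$, whence $\Tr(f(W_{k+1}))=\Tr(f(W_k))$ for every analytic $f$. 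Equivariance (item~3) is immediate from linearity of every defining equation; the Lie--Poisson equivariance asserted in item~(4) reduces to item~(3) after checking that every linear Lie--Poisson isomorphism of $\GL(n,\Cc)^*$ arises from a Lie algebra morphism under the Frobenius identification.

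The principal obstacle is items (5)--(8). My strategy is to establish a single structural lemma: if $W_k\in\GG$ for one of the algebras listed, then every $X_i$, $Y_i$, $K_{ij}$, and $\widetilde W_i$ lies in the subspace naturally associated with $\GG$. For the quadratic algebras~\eqref{eq:quadratic_alg1}, this is an induction on the stage fixed-point iteration, using that $B(\widetilde W_i)\in\NN(\GG)$ and that the relation $J^2=cI$ converts right multiplication by $B$ into left multiplication up to conjugation by $J$; this swap is precisely why the $X_i$ and $Y_i$ blocks must be kept separate in the definition. Items (6) and (7) follow from (5) because arbitrary intersections and direct products of the stated structures are preserved stage by stage. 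Item (8) is the analogue on $\GG^\perp$ and additionally requires the invariance $[\NN(\GG),\GG^\perp]\subset\GG^\perp$. The subtlest point, where I expect most of the real work to lie, is the $K_{ij}$ part of the structural lemma: the cross-stage corrections carry both an $X$-type and a $Y$-type contribution, and verifying that they land in the correct subspace after the $J$-conjugacy swap appears to require invoking the symplecticity condition once more.
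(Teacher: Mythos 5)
Your high-level picture --- that the scheme arises by reducing a symplectic Runge--Kutta method applied to a lifted system on $T^*GL(n,\Cc)$, with $X_i$, $Y_i$, $K_{ij}$ as reduced stage data --- is indeed the paper's strategy, but several of your steps are wrong or missing as stated. First, the reduction is not by the cotangent lift of conjugation: the paper lifts via the momentum map $\mu(Q,P)=Q^\dagger P$ of the right-translation action and reduces by the left-translation symmetry $G\cdot(Q,P)=(GQ,(G^{-1})^\dagger P)$; moreover Definition~\ref{def:isosyrk} comes from a plain (not partitioned) symplectic RK method --- the partitioned case is the separate, weaker Theorem~\ref{thm:main_isosyrk_partitioned}. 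Second, your claim that item (3) is ``immediate from linearity'' fails: the stage equations contain one-sided associative products such as $\big(W_k+\sum_j a_{ij}X_j\big)hB(\widetilde W_i)$, which a Lie algebra morphism $\mathcal A$ does not respect, so one needs the substitution and relabelling argument of Theorem~\ref{thm:affine_equiv}. Third, the theorem concerns arbitrary $B$, not only $B=\nabla H^\dagger$; for general $B$ there is no canonical Hamiltonian lift, and the paper must pass through the B-series/modified-equation argument of Corollary~\ref{cor1} to transfer conclusions from the Hamiltonian case. Your proposal omits this entirely.

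The most serious gap is in items (5)--(8). Your ``structural lemma'' is false as stated: if $W_k\in\GG$ with $\GG$ $J$-quadratic and $B(\widetilde W_i)\in\NN(\GG)$, the individual stages $X_i$ are associative products of algebra elements and do \emph{not} lie in $\GG$; only the combinations $X_i+Y_i=X_i-J^{-1}X_i^\dagger J$ and $K_{ii}+\widetilde K_{ii}=K_{ii}-J^{-1}K_{ii}^\dagger J$ do, and establishing these identities is exactly where the symplecticity condition and the unreduced picture enter. The paper does not induct on the stages downstairs at all: it observes that a symplectic RK method preserves the quadratic invariant $Q^\dagger JQ=J$, hence the foliation $\mathcal F_G=\{Q\mid GQ^\dagger\in N(S)\}$ of Theorem~\ref{thm1}, and preservation of $S$ follows by descent. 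Your isospectrality plan has the same defect: the conjugation $W_{k+1}=\Omega W_k\Omega^{-1}$ does exist (with $\Omega=\Omega_Q^\dagger$, where $Q_{k+1}=Q_k\Omega_Q$), but the fact that the right-hand factor is $\Omega^{-1}$ is precisely the preservation of the bilinear invariant $QP^\dagger$ by the symplectic RK method upstairs; asserting that ``symplecticity permits assembling the pieces'' without this identification leaves item (2) unproved.
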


\begin{proof}
	The theorem is a combination of results proved in Theorem~\ref{thm1}, Corollary~\ref{cor1}, Theorem~\ref{thm_RK1}, and Theorem~\ref{thm:affine_equiv} below.
\end{proof}

\begin{rem}
	Items (5)--(6) of Theorem~\ref{thm:main_isosyrk} implies that the IsoSyRK methods constitute Lie--Poisson integrators for the classical Lie algebras $\SL(n,\Cc)$, $\SL(n,\Rr)$, $\SO(n)$, $\U(n)$, $\SU(n)$, $\SP(n,\Cc)$, and $\SP(n,\Rr)$.
	Item (8) implies that they also preserve the classical isospectral setting as flows on symmetric or Hermitian matrices, since, for example, $\SO(n)^\bot = \Sym(n,\Rr)$.
\end{rem}

We also have an analogous, albeit slightly weaker, result for partitioned symplectic Runge--Kutta methods, such as defined by 
two Butcher tableaux (cf.~\cite{hlw})
\begin{equation}\label{eq:butchertab_part}
	\renewcommand\arraystretch{1.2}
	\begin{array}
	{c|c}
	\mathbf{c}  & \mathbf{A} \\
	\hline
	& \mathbf{b}^T  
	\end{array}
	\hspace{2cm}
	\renewcommand\arraystretch{1.2}
	\begin{array}
	{c|c}
	\mathbf{\widehat{c}}  & \mathbf{\widehat{A}} \\
	\hline
	& \mathbf{\widehat{b}}^\top 
	\end{array}
	\; .
\end{equation}
If, for $i,j=1,\ldots,s$, the coefficients in the tableaux fulfill 
\begin{equation}\label{eq:sympl_condition_part}
\begin{array}{ll}
b_i\widehat{a}_{ij}+\widehat{b}_ja_{ji}=b_i\widehat{b}_j, \\
\widehat{b}_i=b_i,
\end{array}
\end{equation}
then the corresponding partitioned Runge--Kutta method is symplectic when applied to canonical Hamiltonian systems on $\Rr^{2n}$.

\begin{defn}[IsoSyPRK]\label{def:isosyrk_part}
	Given two Butcher tableaux~\eqref{eq:butchertab_part}
	fulfilling the symplectic conditions~\eqref{eq:sympl_condition_part},
	the corresponding \emph{Isospectral Symplectic Partitioned Runge--Kutta method} for the flow \eqref{iso_gen} is the map
	\begin{equation*}
		\Phi_{h}\colon\GL(n,\Cc)\ni W_k \longmapsto W_{k+1}\in \GL(n,\Cc)
	\end{equation*}
	defined by
\begin{align*}
X_i &= - h\Big(W_k+\sum_{j=1}^s a_{ij} X_j\Big)B(\widetilde{W}_i)   
\\
Y_i &=  hB(\widetilde{W}_i)\Big(W_k+\sum_{j=1}^s \widehat{a}_{ij} Y_j\Big)  
\\
K_{ij} &= hB(\widetilde{W}_i)\Big(\sum_{j'=1}^s (a_{ij'}X_{j'}+\widehat{a}_{jj'}K_{ij'})\Big)
\\
\widetilde{W}_i&=W_k+\sum_{j=1}^s a_{ij} X_j+\widehat{a}_{ij} (Y_j+K_{ij}) 
\\
W_{k+1} &= W_k + h\sum_{i=1}^sb_i[B(\widetilde{W}_i),\widetilde{W}_i] .
\end{align*} 
	for $i,j=1,\ldots,s$, where $h>0$ denotes the step size.
\end{defn} 

\begin{thm}\label{thm:main_isosyrk_partitioned}
	The method in Definition~\ref{def:isosyrk_part} fulfills the following properties:
	\begin{enumerate}
		\item It has the same order as the underlying partioned Runge--Kutta method.

		\item It is isospectral; for any analytic function $f$ extended to matrices
		\begin{equation*}
			\Tr(f(W_{k+1})) = \Tr(f(W_k)).
		\end{equation*}

		\item It is equivariant with respect to Lie algebra morphisms; \linebreak if $\mathcal A\colon \GL(n,\Cc)\to \GL(n,\Cc)$ is a linear invertible mapping fulfilling for all $X,Y\in\GL(n,\Cc)$
		\begin{equation*}
			\mathcal A[X,Y] = [\mathcal A X,\mathcal A Y],
		\end{equation*}
		then the following diagram commutes
		\[
		\begin{tikzcd}
			W_{k} \arrow[r,"\mathcal A"]\arrow[d,"\Phi_{h B}"] & W_k'\arrow[d,"\Phi_{h \mathcal A\circ B\circ\mathcal A^{-1}}"] \\
			W_{k+1} \arrow[r,"\mathcal A"] & W'_{k+1}
		\end{tikzcd}
		\]

		\item It is a Lie--Poisson integrator if the isospectral flow is Hamiltonian, i.e., of the form \eqref{eq:ham_isospectral}. Furthermore, if the isospectral flow is Hamiltonian, it is equivariant with respect to linear Lie--Poisson isomorphisms $\mathcal B\colon \GL(n,\Cc)^*\to \GL(n,\Cc)^*$. 

		\item If it restricts to a Lie--Poisson integrator for a Lie subalgebra $\GG\subset \GL(n,\Cc)$ defined by
		\begin{equation*}\label{eq:quadratic_alg}
			W\in \GG \iff W^\dagger J + J W = 0,
		\end{equation*}
		where $J^2 = c I $ for some $c\in \mathbb{R}\backslash \{ 0\}$ and $b_i\neq 0$ for $i=1,\ldots,s$, then the two Butcher tableaux coincide (it is a standard Runge--Kutta method).

	\end{enumerate}
\end{thm}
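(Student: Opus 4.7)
Properties (1)--(4) for the IsoSyPRK method follow the same arguments as for IsoSyRK in Theorem~\ref{thm:main_isosyrk}: order preservation (1) is a standard fact for partitioned symplectic Runge--Kutta schemes \cite{hlw}; isospectrality (2) is immediate from the commutator form of the update; equivariance (3) uses only that $\mathcal{A}$ preserves $[\cdot,\cdot]$; and the Lie--Poisson property (4) is obtained by Poisson reduction of a partitioned symplectic RK on $T^*G$, paralleling Section~\ref{sec:isosyrk}. The distinctive statement is item (5), on which I concentrate.

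Define $\tau(X):=J^{-1}X^{\dagger}J$; since $J^{2}=cI$ with $c\in\Rr\setminus\{0\}$, $\tau$ is an involutive anti-automorphism of $\GL(n,\Cc)$ with fixed-point set $\GG=\{X:\tau(X)=-X\}$. Setting $F_{i}:=hB(\widetilde{W}_{i})\in\GG$ and using $\tau([A,B])=-[\tau(A),\tau(B)]$, the update yields
\[
\tau(W_{k+1})+W_{k+1}=\sum_{i}b_{i}\bigl[F_{i},\,\widetilde{W}_{i}+\tau(\widetilde{W}_{i})\bigr]=:\sum_{i}b_{i}[F_{i},Z_{i}].
\]
Thus the method is a Lie--Poisson integrator on $\GG$ exactly when $\sum_{i}b_{i}[F_{i},Z_{i}]=0$ for every admissible $(W_{k},H,h)$. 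The plan is to expand this identity as a formal power series in $(F_{1},\ldots,F_{s})\in\GG^{s}$, whose independence (for generic $H$ and small $h>0$, since the stages $\widetilde{W}_i$ separate) is the main technical point.

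To first order in the $F_{l}$'s the stage equations give $X_{l}=-W_{k}F_{l}+O(|F|^{2})$, $Y_{l}=F_{l}W_{k}+O(|F|^{2})$, $K_{kl}=O(|F|^{2})$, and using $\tau(W_{k})=-W_{k}$, $\tau(F_{l})=-F_{l}$ one computes
\[
Z_{k}=\sum_{l}(\widehat{a}_{kl}-a_{kl})\{W_{k},F_{l}\}+O(|F|^{2}),
\]
where $\{\cdot,\cdot\}$ is the anticommutator. The leading (quadratic) term of $\sum_{k}b_{k}[F_{k},Z_{k}]$ is therefore $\sum_{k,l}b_{k}(\widehat{a}_{kl}-a_{kl})[F_{k},\{W_{k},F_{l}\}]$, which must vanish identically in $(W_{k},F_{1},\ldots,F_{s})\in\GG^{s+1}$. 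Using $[F_{i},\{W_{k},F_{i}\}]=[F_{i}^{2},W_{k}]$, the coefficient of $F_{i}F_{i}$ gives $b_{i}(\widehat{a}_{ii}-a_{ii})[F_{i}^{2},W_{k}]\equiv 0$, whence $\widehat{a}_{ii}=a_{ii}$ (since $[F_{i}^{2},W_{k}]$ is not identically zero on $\GG^{2}$ and $b_{i}\neq 0$). For $i\ne j$ the coefficient of $(F_{i},F_{j})$ reads
\[
b_{i}(\widehat{a}_{ij}-a_{ij})[F_{i},\{W_{k},F_{j}\}]+b_{j}(\widehat{a}_{ji}-a_{ji})[F_{j},\{W_{k},F_{i}\}]\equiv 0.
\]
Specialising $W_{k}=F_{j}$ and then $W_{k}=F_{i}$, and using the bracket identities $[F_{i},\{F_{j},F_{j}\}]=2[F_{i},F_{j}^{2}]$, $[F_{j},\{F_{j},F_{i}\}]=-[F_{i},F_{j}^{2}]$, $[F_{i},\{F_{i},F_{j}\}]=[F_{i}^{2},F_{j}]$, $[F_{j},\{F_{i},F_{i}\}]=-2[F_{i}^{2},F_{j}]$, one obtains the two linear relations $2u=v$ and $u=2v$ with $u:=b_{i}(\widehat{a}_{ij}-a_{ij})$, $v:=b_{j}(\widehat{a}_{ji}-a_{ji})$. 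These force $u=v=0$, whence $\widehat{a}_{ij}=a_{ij}$. Combined with $\widehat{b}=b$ from the symplectic condition~\eqref{eq:sympl_condition_part}, the two Butcher tableaux coincide.

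The most delicate point is the independence claim for $F_{1},\ldots,F_{s}$: at $B\equiv 0$ all stages collapse to $W_{k}$, so the linearisation of $H\mapsto(F_{1},\ldots,F_{s})$ is only rank one; independence must be established from higher-order expansions, exploiting that $\widetilde{W}_{i}-W_{k}=O(h)$ with $i$-dependent $O(h)$-contributions under generic row sums, which allows a polynomial Hamiltonian of sufficient degree to realize small independent values of the $F_{i}$. Alternatively one may strengthen the hypothesis to require preservation of $\GG$ for every (not necessarily gradient) $B\colon\GL(n,\Cc)\to\GG$, as in the general isospectral setting~\eqref{iso_gen}, in which case the independence is automatic and the same argument applies verbatim.
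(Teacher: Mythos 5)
For items (1)--(4) you argue along the same lines as the paper, namely by descending a (partitioned) symplectic Runge--Kutta method from $T^*GL(n,\Cc)$ (Theorem~\ref{thm1}, Corollary~\ref{cor1}, Theorem~\ref{thm_RK2}(1)--(2), Theorem~\ref{thm:affine_equiv}), so those parts are fine. For item (5) your route is genuinely different from the paper's: you work directly on the reduced scheme, introduce the involution $\tau(X)=J^{-1}X^{\dagger}J$, write $\tau(W_{k+1})+W_{k+1}=\sum_i b_i[F_i,Z_i]$ with $F_i=hB(\widetilde W_i)$, and try to force $\widehat a_{ij}=a_{ij}$ by treating the $F_i$ as independent variables in the leading quadratic term. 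The paper instead stays upstairs: by Theorem~\ref{thm1}, preservation of $S$ is equivalent to the $Q$-component preserving the \emph{quadratic} invariant $Q^{\dagger}JQ=J$, whereas a partitioned symplectic pair is only guaranteed to preserve \emph{bilinear} invariants $a(Q,P)$; demanding that the $\widehat{\mathbf A}$-tableau preserve general quadratic invariants gives the Cooper condition $b_i\widehat a_{ij}+b_j\widehat a_{ji}=b_ib_j$, which combined with \eqref{eq:sympl_condition_part} yields $b_j(\widehat a_{ji}-a_{ji})=0$ immediately.

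The gap in your version is precisely the point you flag and do not close: the $F_i$ are not independent. Since $\widetilde W_i=W_k+O(h)$, at the order at which your quadratic form $\sum_{i,j}b_i(\widehat a_{ij}-a_{ij})[F_i,\{W_k,F_j\}]$ first appears (namely $O(h^2)$) all the $F_i$ coincide with $hB(W_k)$, so the form is only tested on the diagonal $F_1=\dots=F_s$. That yields the single scalar condition $\sum_{i,j}b_i(\widehat a_{ij}-a_{ij})=0$, not the full family $b_i(\widehat a_{ij}-a_{ij})=0$; in particular your specializations $W_k=F_j$ and $W_k=F_i$ for $i\neq j$ are not available, because you cannot vary $F_i$ and $F_j$ separately. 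Recovering the individual coefficients would require pushing the expansion to higher order in $h$, where the $O(|F|^3)$ remainders you discard contribute at the same order as the separation between the stages --- i.e., a systematic order-condition analysis that is not carried out. Your fallback of strengthening the hypothesis to arbitrary $B\colon\GL(n,\Cc)\to\GG$ does not make independence ``automatic'' either, since the $F_i$ are still values of the one function $B$ at implicitly defined, mutually close stage points. As written, item (5) is therefore not proved; the paper's unreduced argument via quadratic first integrals is the clean way to close it.
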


\begin{proof}
	The theorem is a combination of results proved in Theorem~\ref{thm1}, Corollary~\ref{cor1}, Theorem~\ref{thm_RK2}, and Theorem~\ref{thm:affine_equiv} below.
\end{proof}

\section{Reduction theory for isospectral Lie--Poisson integrators}\label{sec:reduction}

Let us consider Lie--Poisson systems of the form \eqref{eq:ham_isospectral}.
Conditions under which the flow remains in a linear subspace $S$ of $\GL(n,\Cc)$ are intrinsically connected to the $\GL(n,\Cc)$-normalizer of S. 
We recall here its definition:

\begin{defn}\label{def_normalizer}
Let $G$ be a Lie group and $\GG$ its Lie algebra. 
Furthermore, let $S\subseteq \GG$ be a linear subspace. 
Then the two sets
\begin{align*}
&N(S)=\lbrace g\in G \mid g^{-1} S g\subseteq S \rbrace\\
&\NN(S)=\lbrace \xi\in \GG \mid [\xi,S]\subseteq S \rbrace
\end{align*}
are respectively called the $G$-normalizer and the $\GG$-normalizer of $S$. 
Notice that $N(S)$ is a subgroup of $G$ and $\NN(S)$ is a Lie subalgebra of $\GG$.
\end{defn}

We now give some examples of Definition~\ref{def_normalizer}. We first give the following definition.

\begin{defn}
Let $\GG\subseteq\SL(n,\Cc)$ be a Lie algebra and $J\in GL(n,\Cc)$.  Then $\GG$ is said to be a $J$-quadratic Lie algebra if $A^\trans J+JA=0$, for any $A\in\GG$.
\end{defn}

\noindent{Examples of $\GL(n,\Cc)$-normalizers:}
\begin{enumerate}
\item $S=\SL(n,\Cc)$ with $\NN(S)=\GL(n,\Cc)$.

\item $S=\GG\subset\SL(n,\Cc)$ is a $J$-quadratic Lie subalgebra with $\NN(S)=\GG\oplus\Cc\, \mathrm{Id}$.
A typical case is $S=\SU(n)$ for which $\NN(S)=\mathfrak{su}(n)\oplus\Cc\, \mathrm{Id}$, corresponding to~$J=\mathrm{Id}$.

\item $S=\GG^\bot$, where $\GG\subset\SL(n,\Cc)$ is a $J$-quadratic Lie subalgebra, with $\NN(S)=\GG\oplus\Cc\, \mathrm{Id}$.\footnote{Orthogonal complements are taken with respect to the Frobenius inner product. We always have that $\NN(S^\bot)=\NN(S)^\trans$.}\
Restricting to $\GL(n,\Rr)$, a typical case is $S=\Sym(n,\Rr)$ and $\NN(S)=\mathfrak{o}(n)\oplus\Rr\, \mathrm{Id}$, corresponding to~$J=\mathrm{Id}$.
\end{enumerate}

\begin{rem}
If $S= \GG$ is a Lie subalgebra of $\GL(n,\Cc)$ one may ask under which conditions the isospectral Hamiltonian system~\eqref{eq:ham_isospectral} coincide with the Lie--Poisson system on $\GG$.
Recall that $\nabla H$ is the gradient of $H$ with respect to the Frobenius inner product.
It is not a restriction to assume that $\nabla H(W)\in \GG$ for all $W\in \GL(n,\Cc)$ (since we can extend $H$ to be constant on the affine complements of $\GG$).
Due to the conjugate transpose on $\nabla H(W)$ it is not, however, enough that $\nabla H(W) \in \GG$; instead we need $\nabla H(W)^\dagger \in \GG$.
A sufficient condition for this to be true is that $\GG$ is closed under conjugate transpose: $\GG^\dagger \subset \GG$.
Such $\GG$ are, up to representation, the \emph{semisimple Lie algebras} (\cite{kna}, Prop. 6.28). 
This means that, after the identification of the dual of the Lie algebra $\GG$ with itself (using the Frobenius inner product), a Lie--Poisson system on a semisimple Lie algebra $\GG$ coincides with a Lie--Poisson system on $\GL(n,\Cc)$ restricted to $\GG$. 
In fact, slightly more is true: due to the bracket in the right hand side of \eqref{eq:ham_isospectral} it is enough that
\[ [\GG^\dagger,\GG]\subset\GG.
\]
This holds when $\GG$ is a \emph{reductive} Lie algebra, i.e., the direct sum of a semisimple Lie 	algebra and an abelian Lie algebra.
\end{rem}

A Lie--Poisson systems on a Lie algebra $\GG$ can be viewed as the \emph{Lie--Poisson reduction} of a canonical Hamiltonian system on $T^*G$ with a $G$-symmetric Hamiltonian.
Going backwards, one may `unreduce' any Lie--Poisson system to a canonical Hamiltonian system on the cotangent bundle of the corresponding Lie group.
Our objective is to show that the Isospectral Symplectic Runge--Kutta methods (cf.~Section~\ref{sec:main_results}) originates as a ``discrete Lie--Poisson reduction'' of symplectic Runge--Kutta methods. 

We thus proceed by extending the equations \eqref{eq:ham_isospectral} to a canonical system on\linebreak $T^*GL(n,\Cc)$. 
To do so, one needs the \emph{momentum map} (cf.~\cite{MaRa1999}) associated with the right action of $GL(n,\Cc)$ on $T^*GL(n,\Cc)$:
\begin{equation*}\label{eq:cotangentaction_right}
(Q,P)\cdot G = (QG, P(G^{-1})^\dagger),
\end{equation*}
for $G,Q\in GL(n,\Cc)$ and $P\in T^*_QGL(n,\Cc)$.
The momentum map for this (Hamiltonian) action is given by
\begin{equation*}
	\mu\colon T^*GL(n,\Cc) \to \GL(n,\Cc)^*\simeq \GL(n,\Cc), \quad \mu(Q,P)=Q^\trans P.
\end{equation*}

This momentum map provides a left-invariant Hamiltonian function $\widetilde{H}(Q,P)=H(Q^\dagger P)$ on $T^*GL(n,\Cc)$, i.e., an Hamiltonian function invariant with respect to the left action
\begin{equation}\label{eq:cotangentaction}
G\cdot(Q,P) = (GQ,(G^{-1})^\dagger P).
\end{equation}

The fact that the momentum map is a Poisson map between $T^*GL(n,\Cc)$ and $\GL(n,\Cc)^*$ means that a symplectic map in $\Phi\colon T^*GL(n,\Cc)\to T^*GL(n,\Cc)$ which is equivariant with respect to the action \eqref{eq:cotangentaction} \emph{descends} to a corresponding map $\phi\colon \GL(n,\Cc)^* \to \GL(n,\Cc)^*$.
In terms of numerical integrators, this means that a $GL(n,\Cc)$-equivariant symplectic integrator on $T^*GL(n,\Cc)$ induces a Poisson integrator on $\GL(n,\Cc)^*$.
As we shall see, this is precisely how the isospectral symplectic Runge--Kutta methods come about. 


Using the momentum map~\eqref{eq:cotangentaction}, the canonical Hamiltonian system on $T^*GL(n,\Cc)$ is given by
\begin{equation}\label{ham1}
\begin{array}{ll} 
	\dot{Q}= Q\nabla H(Q^\trans P) \\
	\dot{P}= -P\nabla H(Q^\trans P)^\trans,
			\end{array}
\end{equation}
where $H$ is the same Hamiltonian as in \eqref{eq:ham_isospectral}.

We now translate the condition of staying on $S$ from \eqref{eq:ham_isospectral} to \eqref{ham1}.
\begin{prop}\label{prop_alg_cons}
Consider a solution $(Q(t),P(t))$ of Hamilton's equations~\eqref{ham1} for a given initial point $(Q(0),P(0))$ and let $S\subseteq\GL^*(n,\Cc)$ be a linear subspace as before. 
Then there exists a time $T>0$ such that the following three statements are equivalent:
\begin{enumerate}
\item $Q(t)^\trans P(t)\in S$, for any $0\leq t\leq  T$;
\item $Q(0)^\trans P(0)\in S$ and $\nabla H(Q^\trans P)^\trans (t)\in\NN(S)$, for any $0\leq t\leq  T$;
\item $Q(0)^\trans P(0)\in S$ and there exists a fixed $G\in GL(n,\Cc)$ such that $GQ(t)^\dagger\in N(S)$, for any $0\leq t\leq  T$.
\end{enumerate}
\end{prop}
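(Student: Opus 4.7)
The plan is to recast Hamilton's equations~\eqref{ham1} in a Lax form centred on $Q(t)^\dagger$, and then apply the Lie group--Lie algebra correspondence for the pair $(N(S),\NN(S))$. Differentiating the momentum map $W=Q^\dagger P$ using \eqref{ham1} yields
\[
\dot W = (Q\nabla H(W))^\dagger P - Q^\dagger P\,\nabla H(W)^\dagger = [\nabla H(W)^\dagger, W],
\]
and simultaneously $\dot{(Q^\dagger)} = \nabla H(W)^\dagger\, Q^\dagger$. Setting $\Phi(t) := Q(t)^\dagger (Q(0)^\dagger)^{-1}$ one gets $\dot\Phi = \nabla H(W)^\dagger\,\Phi$ with $\Phi(0)=I$, whence the conjugation formula $W(t) = \Phi(t)\,W(0)\,\Phi(t)^{-1}$. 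Thus $Q(t)^\dagger$ is essentially the Lax conjugator between $W(0)$ and $W(t)$.

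Using left-invariance of $\widetilde H$, I would normalize to $Q(0) = I$: the left-translated curve $(GQ,(G^{-1})^\dagger P)$ with $G = Q(0)^{-1}$ is again a solution with the same $W(t)$, and the element $G$ appearing in condition (3) is shifted by at most a factor in $N(S)$. In this normalization $\Phi(t) = Q(t)^\dagger$ and condition (3) reads ``$\Phi(t)\in N(S)$ for $0\le t\le T$''. The equivalence (2)$\Leftrightarrow$(3) then follows from the Lie group--Lie algebra correspondence: since $\Phi(0)=I$, the curve $\Phi$ lies in the Lie subgroup $N(S)$ on $[0,T]$ if and only if its left logarithmic derivative $\dot\Phi\,\Phi^{-1} = \nabla H(W(t))^\dagger$ lies in the Lie algebra $\NN(S)$ on $[0,T]$. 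The implication (3)$\Rightarrow$(1) is just the conjugation formula: $\Phi(t)\in N(S)$ gives $\Phi(t)\,S\,\Phi(t)^{-1}=S$, so $W(t)\in S$ whenever $W(0)\in S$.

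The remaining direction (1)$\Rightarrow$(2) is what I expect to be the main obstacle. Differentiating $W(t)\in S$ only yields $[\nabla H(W(t))^\dagger, W(t)]\in S$ at each $t$, which is strictly weaker than $\nabla H(W(t))^\dagger \in \NN(S)=\{X:[X,S]\subseteq S\}$, since it pins the bracket only against the single element $W(t)$ rather than against all of $S$. To bridge this gap I would invoke the convention discussed in the Remark preceding the proposition: $H$ is extended from $S$ to $\GL(n,\Cc)$ by being constant on the affine complement of $S$, forcing $\nabla H(W)\in S$ for every $W\in S$. Combined with the Lie-theoretic hypothesis underlying the normalizer computations (in particular $[S^\dagger,S]\subseteq S$ on the relevant reductive piece), this forces $\nabla H(W)^\dagger \in \NN(S)$ for $W\in S$ on the trajectory, closing the triangle of equivalences.
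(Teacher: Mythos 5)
Your overall architecture matches the paper's: you pass to the Lax form $\dot W=[\nabla H(W)^\trans,W]$, use the conjugating curve built from $Q(t)^\trans$, and obtain $(2)\Leftrightarrow(3)$ from the correspondence between $N(S)$ and $\NN(S)$. If anything, your use of the fundamental solution $\Phi(t)=Q(t)^\trans (Q(0)^\trans)^{-1}$ with $\dot\Phi=\nabla H(W)^\trans\Phi$ is cleaner than the paper's, which writes the conjugator as $\exp\big(\int_0^t\nabla H(W)^\trans\,ds\big)$ even though the integrands at different times need not commute. The implications $(2)\Rightarrow(1)$ and $(3)\Rightarrow(1)$ via $W(t)=\Phi(t)W(0)\Phi(t)^{-1}$ are fine, as is the bookkeeping of the fixed element $G$ under the normalization $Q(0)=I$.

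The genuine gap is your repair of $(1)\Rightarrow(2)$. You correctly observe that differentiating $W(t)\in S$ only yields $[\nabla H(W(t))^\trans,W(t)]\in S$, which constrains the bracket against the single element $W(t)$ rather than against all of $S$ --- this is precisely the step the paper itself asserts without further argument. But the bridge you propose does not close it: the proposition concerns an arbitrary linear subspace $S$, and neither $\nabla H(W)\in S$ nor $[S^\trans,S]\subseteq S$ is among its hypotheses. Worse, these two ingredients are inconsistent with the paper's central examples. For $S=\Sym(n,\Rr)$ (Toda, Chu, Bloch--Iserles) one has $\NN(S)=\SO(n)\oplus\Rr\,\mathrm{Id}$ and $[S,S]\subseteq\SO(n)\not\subseteq S$, and the relevant flows have $\nabla H(W)^\trans$ skew-symmetric, not symmetric; the convention ``$H$ constant on the affine complement of $S$'' (which forces $\nabla H(W)\in S$) is the one used in the introduction for $S$ a reductive subalgebra, whereas the extension relevant here is Assumption~\ref{ass1}, which directly posits $\nabla H(W)^\trans\in\NN(S)$ and is introduced only \emph{after} the proposition. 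If your argument were valid it would force $\nabla H(W)^\trans\in S\cap\NN(S)=\Rr\,\mathrm{Id}$, i.e.\ a trivial flow. So $(1)\Rightarrow(2)$ remains unproved in your write-up; you should either state it with the paper's (implicit) extension hypothesis on $\nabla H^\trans$ along the trajectory, or explicitly flag that the implication requires such a hypothesis.
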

\proof 
Let $U$ be a neighbourhood of $Q(0)^\trans P(0)$ such that the map $\exp^{-1}:U\subset GL(n,\Cc)\rightarrow\GL(n,\Cc) $ is well defined. Then, let $T$ be a positive real number such that $\exp(\int_{0}^{t} \nabla H(Q^\trans P)^\trans (s) ds)\in U$, for $0\leq t\leq T$.

$1)\Rightarrow 2)$ We have that $S\ni\frac{dQ^\trans P}{dt}=[\nabla H(Q^\trans P)^\trans ,Q^\trans P]$, since $S$ is a linear space and $(Q^\trans P)(t)\in S$, for any $0\leq t\leq T$. But this means that $\nabla H(Q^\trans P)^\trans $ has to be in $\NN(S)$, for any $0\leq t\leq  T$. 

$2)\Rightarrow 1)$ For $0\leq t\leq T$, we have that:
\begin{center}
$(Q^\trans P)(t)=\exp(\int_{0}^{t} \nabla H(Q^\trans P)^\trans (s) ds)Q(0)^\trans P(0)\exp(-\int_0^t \nabla H(Q^\trans P)^\trans (s) ds),$
\end{center}
which proves the statement, since $N(S)\supseteq\exp(\NN(S))$.

$2)\Rightarrow 3)$ Let $G\in GL(n,\Cc)$ such that $GQ(t)^\dagger\in N(S)$. Then we have:
\begin{center}
$GQ(t)^\trans=\exp(\int_{0}^{t} \nabla H(Q(s)^\trans P(s))^\trans  ds)GQ(0)^\trans,$
\end{center}
which proves the statement, since $N(S)\supseteq\exp(\NN(S))$.

$3)\Rightarrow 2)$ By the formula above we have
\begin{center}
$GQ^\trans (t)(GQ(0)^\trans)^{-1}=\exp(\int_{0}^{t} \nabla H(Q(s)^\trans P(s))^\trans ds)$.
\end{center}
Since the left-hand side is in $N(S)$ for any $0\leq t\leq  T$,
we have $\nabla H(Q(t)^\trans P(t))^\trans \in\NN(S)$ for any $0\leq t\leq  T$, by the definition of $T$.
\endproof

Although the statements in Proposition~\ref{prop_alg_cons} are equivalent for the exact flow, they are different after discretization.
Indeed, in order to understand the conditions for our isospectral symplectic Runge--Kutta methods to preserve the flow on $S$ we need the definition of weak and strong first integrals.

\begin{defn}
Let $M$ be a smooth manifold and $N\subset M$ a smooth submanifold. 
Consider the following dynamical system on $N$:
\begin{equation}\label{dyn_sys}
\begin{array}{ll} 
	\dot{z}= X(z) \\
	z(0) = z_0,
\end{array}
\end{equation}
with $X$ a smooth vector field on $N$ and $z_0\in N$. 
Assume further that $X$ can be extended on a $\varepsilon-$neighbourhood $N_\varepsilon$ of $N$ in $M$.

Then a differentiable function $I\colon N_\varepsilon\rightarrow\Cc$ is said to be a \emph{weak}, respectively, \emph{strong} first integral of (\ref{dyn_sys}) if
\begin{equation*}
\begin{array}{ll} 
	\langle d I(z), X(z)\rangle = 0 \mbox{ for all }z\in N \\
	\langle d I(z), X(z)\rangle = 0 \mbox{ for all }z\in N_\varepsilon.
\end{array}
\end{equation*}

\end{defn}

In numerical analysis it is often the case that integration schemes on a submanifold $N$ actually depends on how $N$ is embedded in a larger (vector) space $M$.
That is, the integration scheme is \emph{not} intrinsic to $N$ (for example evaluations of the vector field outside of $N$ may occur).
In this situation, the difference between strong and weak first integrals is essential.
Indeed, for non-intrinsic methods one can at best expected to conserve strong first integrals.
%
Motivated by this we make the following:
\begin{ass}\label{ass1}
Let $S_\varepsilon$ be a $\varepsilon-$neighbourhood of $S$ in $\GL(n,\Cc)$. 
We assume that $\nabla H^\trans$ can be extended to $S_\varepsilon$ such that $\nabla H(W)^\trans \in\NN(S)$ for all $W\in S_\varepsilon$. 
\end{ass}
Since $S$ is a linear space the natural way to extend $\nabla H^\trans$ is to take it to be constant on the affine complements of $S$.
With this extension the gradient of the Hamiltonian requires only an orthogonal projection of $W$ to $S$.

Under Assumption~\ref{ass1} our Proposition~\ref{prop_alg_cons} says that $(Q^\trans P)\in S$ is determined by \textit{weak first integrals} 
of the Hamiltonian system \eqref{ham1} provided that the gradient of the Hamiltonian is in $\NN(S)$. 
In fact, having $(Q^\trans P)\in S$ is equivalent to $[\nabla H(Q^\trans P)^\trans ,Q^\trans P]\in S$ which in general is not true for $Q^\trans P$ in an $\varepsilon-$neighbourhood of $S$. 
Instead an equivalent formulation corresponding to \textit{strong first integrals}
is given by the third statement, which says that there exists a fixed matrix $G$ such that $GQ^\trans \in N(S)$. 
Therefore only the numerical methods that have $GQ^\trans \in N(S)$ as a discrete invariant correspond to integrators that preserve~$S$.
In particular, if $N(S)$ is a quadratic Lie group one can expect symplectic Runge--Kutta methods to yield a discrete flow that preserves $S$ since they preserve general quadratic first integrals.
On the other hand, the same cannot be expected from symplectic partitioned Runge--Kutta methods, since they only preserve special (bilinear) quadratic first integrals.


We summarize our findings in the following theorem.
\begin{thm}\label{thm1}
Consider a Lie--Poisson system of the form \eqref{eq:ham_isospectral} evolving on a linear subspace $S\subset\GL(n,\Cc)$.
Let $\Phi_h\colon T^*GL(n,\Cc)\to T^*GL(n,\Cc)$ be a symplectic numerical method for the corresponding canonical Hamiltonian system \eqref{ham1} obtained by extension from $S$ in accordance with Assumption~\ref{ass1}.
\begin{enumerate}
	\item If $\Phi_h$ is equivariant with respect to the action \eqref{eq:cotangentaction}, i.e.,
\[
	G\cdot\Phi_h(Q,P) =\Phi_h( G\cdot(Q,P)). 
\]
then it descends to a Lie--Poisson integrator $\phi_h$ on $\GL(n,\Cc)$.
	\item If, in addition, $\Phi_h$ preserves the foliation 
	\[
		\mathcal F_G = \{Q\mid GQ^\trans \in N(S) \}, \qquad G\in GL(n,\Cc)
	\]
	then $\phi_h$ restricts to an integrator on $S$.
\end{enumerate}
\end{thm}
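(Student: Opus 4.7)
The strategy is to perform discrete Lie--Poisson reduction through the momentum map $\mu(Q,P)=Q^\trans P$, which realises $\GL(n,\Cc)^*$ as the quotient $T^*GL(n,\Cc)/GL(n,\Cc)$ under the left action~\eqref{eq:cotangentaction}. Item (1) is the discrete analogue of the standard Poisson reduction argument, while item (2) combines this reduction with the equivalence of strong and weak first integrals established in Proposition~\ref{prop_alg_cons}.

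For item (1) I first verify that the fibres of $\mu$ coincide with the orbits of the left action: a direct computation gives $\mu(GQ,(G^{-1})^\trans P)= Q^\trans G^\trans (G^{-1})^\trans P = Q^\trans P$. Hence left-equivariance of $\Phi_h$ guarantees that $\mu\circ\Phi_h$ is constant on $\mu$-fibres, so there is a unique map $\phi_h\colon\GL(n,\Cc)^*\to\GL(n,\Cc)^*$ characterised by $\phi_h\circ\mu=\mu\circ\Phi_h$. Since $\Phi_h$ is symplectic (in particular Poisson) and $\mu$ is a surjective Poisson submersion, a standard push-forward argument forces $\phi_h$ to be Poisson.

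For item (2) I must show $\phi_h(S)\subseteq S$, equivalently $\Phi_h(\mu^{-1}(S))\subseteq \mu^{-1}(S)$. Given $W_0\in S$, take the lift $(Q_0,P_0)=(I,W_0)\in\mu^{-1}(W_0)$ and pick $G=I$, so that $GQ_0^\trans=I\in N(S)$ and $Q_0\in\mathcal{F}_I$. The foliation-preservation hypothesis then yields $Q_k\in\mathcal{F}_I$, i.e.\ $Q_k^\trans\in N(S)$, for all $k\ge 0$. I next plan to invoke a discrete analogue of the implication (3)$\Rightarrow$(1) of Proposition~\ref{prop_alg_cons}: the strong first integral $Q_k^\trans\in N(S)$ together with the initial condition $Q_0^\trans P_0\in S$ should force $Q_k^\trans P_k\in S$ throughout the iteration, and item (1) will then deliver $\phi_h(W_0)=Q_1^\trans P_1\in S$.

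The main obstacle is precisely this discrete transfer of Proposition~\ref{prop_alg_cons}(3)$\Rightarrow$(1). In the continuous setting the argument uses the integrated formula $(Q^\trans P)(t)=e^{\int_0^t\nabla H^\trans ds}\,W_0\,e^{-\int_0^t\nabla H^\trans ds}$, whose exponential factors lie in $N(S)$ by Assumption~\ref{ass1} and therefore conjugate $W_0\in S$ back into $S$. Discretely, I expect to exploit that the internal stages of the symplectic integrator applied to~\eqref{ham1} evaluate $\nabla H^\trans$ at arguments which, by foliation preservation together with Assumption~\ref{ass1}, all belong to $\NN(S)$; the discrete propagator for $Q^\trans$ is then a composition of $N(S)$-valued factors that act on $W_0$ by a conjugation-type formula, placing $W_1$ in $S$. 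Making this factorisation explicit and verifying its compatibility with the symplectic step is the subtlety requiring the most care.
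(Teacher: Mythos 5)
Your treatment of item (1) is correct and follows the route the paper intends: $\mu(Q,P)=Q^\trans P$ is invariant under the action \eqref{eq:cotangentaction}, its fibres over invertible $Q$ are exactly the orbits (take $G=Q_2Q_1^{-1}$ for the inclusion you did not verify --- and that inclusion is the one equivariance actually needs), and the identity $\{f,g\}\circ\phi_h\circ\mu=\{f\circ\phi_h,g\circ\phi_h\}\circ\mu$ together with surjectivity of $\mu$ gives the Poisson property of $\phi_h$.

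Item (2) is left with a genuine gap, and you have correctly located it yourself: from $Q_1^\trans\in N(S)$ and $W_0=Q_0^\trans P_0\in S$ one cannot conclude $W_1=Q_1^\trans P_1\in S$ without controlling $P_1$, and your proposed remedy --- factorising the internal stages of the integrator into $N(S)$-valued conjugation factors --- is both unexecuted and unavailable at the level of generality of Theorem~\ref{thm1}, whose hypotheses say nothing about internal stages. The missing ingredient is the \emph{second} momentum map, $\nu(Q,P)=PQ^\trans$, associated with the left action itself. It is a first integral of \eqref{ham1}, since $\frac{d}{dt}(PQ^\trans)=-P\nabla H^\trans Q^\trans+P\nabla H^\trans Q^\trans=0$, and it is conserved by $\Phi_h$: a symplectic map commuting with a Hamiltonian group action preserves the associated momentum map up to an additive constant, which vanishes for a consistent one-step method (for symplectic Runge--Kutta methods one may instead invoke exact conservation of the bilinear invariant $PQ^\trans$). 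With the lift $(Q_0,P_0)=(I,W_0)$ this yields $P_1Q_1^\trans=P_0Q_0^\trans=W_0$, hence
\[
W_1=Q_1^\trans P_1=Q_1^\trans W_0\,(Q_1^\trans)^{-1},
\]
a conjugation of $W_0$ by $Q_1^\trans\in N(S)$, which lies in $S$ because $N(S)$ is a group normalizing $S$. This single identity is the step your sketch gestures at but does not supply; it also explains at once why the descended map is isospectral. For comparison, note that the paper states Theorem~\ref{thm1} as a summary of the surrounding discussion (Proposition~\ref{prop_alg_cons} and the strong/weak integral dichotomy) and defers the actual verification of $S$-preservation to the explicit computations in Theorem~\ref{thm_RK1}, where $W_{n+1}-W_n$ is exhibited directly as a sum of terms of the form $X-J^{-1}X^\trans J$ lying in $S$; so your abstract route is legitimate, but it must be closed with the conservation of $PQ^\trans$ rather than with a stage-by-stage factorisation.
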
 

Based on the results in Theorem~\ref{thm1}, we can now generalize the results to a general $B(\cdot)$, i.e., to isospectral flows that are not necessarily Hamiltonian. 
This extension requires that the underlying method can be expanded in a \emph{B-series} or \emph{P-series} (cf.~\cite{hlw} for definitions and notation).\footnote{Please notice the following clash of notation: \emph{B-series} and \emph{P-series} have nothing to do with the function $B$ and the variable $P$ as defined in this paper.}\
Consider first the generalization of Assumption~\ref{ass1}:
\begin{ass}\label{ass2}
Let $S_\varepsilon$ be a $\varepsilon-$neighbourhood of $S$ in $\GL(n,\Cc)$. 
We assume that $B(\cdot)$ can be extended to $S_\varepsilon$ such that $B(W) \in\NN(S)$ for all $W\in S_\varepsilon$. 
\end{ass}
Then, based on Theorem~\ref{thm1}, we have the following result.
\begin{cor}\label{cor1}
Consider an isospectral flow of the form \eqref{iso_gen} evolving on a linear subspace $S\subset\GL(n,\Cc)$.
Let $\Phi_h\colon T^*GL(n,\Cc)\to T^*GL(n,\Cc)$ be a symplectic B-series (or P-series) method for the corresponding system:
\begin{equation}\label{eq:can_B}
\begin{array}{ll} 
	\dot{Q}= QB(Q^\trans P)^\trans \\
	\dot{P}= -PB(Q^\trans P),
			\end{array}
\end{equation} obtained by extension from $S$ in accordance with Assumption~\ref{ass2}.
\begin{enumerate}
	\item If $\Phi_h$ is equivariant with respect to the action \eqref{eq:cotangentaction}, i.e.,
\[
	G\cdot\Phi_h(Q,P) =\Phi_h( G\cdot(Q,P)). 
\]
then it descends to an isospectral integrator $\phi_h$ on $\GL(n,\Cc)$.
	\item If, in addition, $\Phi_h$ preserves the foliation 
	\[
		\mathcal F_G = \{Q\mid GQ^\trans \in N(S) \}, \qquad G\in GL(n,\Cc)
	\]
	then $\phi_h$ restricts to an integrator on $S$.
\end{enumerate}
\end{cor}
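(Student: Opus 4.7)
The plan is to repeat the reduction argument of Theorem~\ref{thm1} mutatis mutandis, replacing $\nabla H(\cdot)^\trans$ by $B(\cdot)$ throughout and working at the level of formal B-series (resp.\ P-series) rather than literally symplectic maps on $T^*GL(n,\Cc)$. The crucial structural feature used in that proof---that the extended system~\eqref{eq:can_B} has the factored right/left multiplicative form $\dot Q = Q B^\trans$, $\dot P = -PB$, and hence satisfies the Lax identity $\frac{d}{dt}(Q^\trans P) = [B(Q^\trans P),Q^\trans P]$---does not depend on $B$ being a gradient, so the skeleton of the argument carries over.

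For item (1), the equivariance of $\Phi_h$ under the left action $G\cdot(Q,P)=(GQ,(G^{-1})^\trans P)$ forces $\mu\circ\Phi_h$ to be constant on fibres of the momentum map $\mu(Q,P)=Q^\trans P$, producing a unique descent $\phi_h$ on $\GL(n,\Cc)^*\simeq\GL(n,\Cc)$ with $\mu\circ\Phi_h=\phi_h\circ\mu$. This step is purely set-theoretic and identical to the Hamiltonian case. What remains is to verify that $\phi_h$ is isospectral, which amounts to showing that $Q_+^\trans P_+$ is always a conjugate of $Q^\trans P$. The exact flow has this property with conjugating element $g(t)$ satisfying $\dot g = B(Q^\trans P)g$; I would establish the discrete analogue by showing that a symplectic B-series (resp.\ P-series) method applied to~\eqref{eq:can_B} produces updates of the multiplicative form $Q_+ = Q\,L^\trans$ and $P_+ = L^{-1} P$ for a single matrix-valued series $L = L(Q^\trans P, h)$, whence $Q_+^\trans P_+ = L\,(Q^\trans P)\,L^{-1}$. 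Tree by tree, both $L$-series are determined by the Butcher coefficients and by successive evaluations of $B(\widetilde W_i)$, and the symplecticity conditions~\eqref{eq:sympl_condition} (resp.~\eqref{eq:sympl_condition_part}) force the two to be formal inverses of each other.

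For item (2), Proposition~\ref{prop_alg_cons} extends verbatim under Assumption~\ref{ass2} with $B(\cdot)$ in place of $\nabla H(\cdot)^\trans$, since its proof only uses the ODE system at the level of $\dot Q$ and $\dot P$ together with the inclusion $N(S)\supseteq\exp(\NN(S))$, both agnostic to the Hamiltonian origin. The equivalence $1)\Leftrightarrow 3)$ in that proposition still turns the leaf condition $Q^\trans P\in S$ into the strong invariant $GQ^\trans\in N(S)$. Hence if $\Phi_h$ preserves the foliation $\mathcal F_G$, then for any $(Q,P)\in\mu^{-1}(W)$ with $W\in S$, the updated pair $(Q_+,P_+)$ also lies in $\mathcal F_G$, so $\phi_h(W)=Q_+^\trans P_+\in S$ as required.

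The hard part will be the multiplicative structure claim in item~(1). In the Hamiltonian setting one could invoke symplectic reduction of an honest Hamiltonian system on $T^*GL(n,\Cc)$; for a general $B$ no such Hamiltonian exists, so the argument must be recast purely at the B-series level. One must verify tree by tree that the B-series (resp.\ P-series) symplecticity condition forces the $Q$- and $P$-matrix series to be formal inverses. This is essentially the content of the Runge--Kutta-level identities invoked in Theorems~\ref{thm:main_isosyrk} and~\ref{thm:main_isosyrk_partitioned}, and the corollary ultimately follows by combining that tree-level identity with the general descent argument described above.
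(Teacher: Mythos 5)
Your skeleton is the right one and agrees with the paper where it is routine: the equivariance of $\Phi_h$ gives the descended map $\phi_h$ via the momentum map exactly as in Theorem~\ref{thm1}, and item~(2) follows from Proposition~\ref{prop_alg_cons} under Assumption~\ref{ass2} together with preservation of the foliation $\mathcal F_G$, with no change needed for non-gradient $B$. (A small slip: since $\dot P=-PB$ is a \emph{right}-multiplicative flow, the update must be $P_+=PL^{-1}$, not $P_+=L^{-1}P$; with your formula $Q_+^\trans P_+=LQ^\trans L^{-1}P$ is not a conjugate of $Q^\trans P$.)

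The genuine gap is that the one step carrying all the content --- that for a general symplectic B-series (or P-series) method the $Q$- and $P$-factors are mutually inverse formal series --- is asserted rather than proved. You flag it as ``the hard part'' and then defer it to an unspecified tree-by-tree verification, claiming it is ``essentially the content'' of the identities invoked in Theorems~\ref{thm:main_isosyrk} and~\ref{thm:main_isosyrk_partitioned}; that is circular, since those theorems are themselves proved by combining the present corollary with Theorem~\ref{thm_RK1}--\ref{thm_RK2}, and the explicit cancellation of the $K_{ij}/\widetilde K_{ij}$ cross terms there uses the Runge--Kutta structure and does not cover arbitrary B-series maps. Note also that you cannot shortcut via the standard ``symplectic $\Leftrightarrow$ preserves quadratic invariants'' characterization: $Q^\trans P$ is not a conserved quantity here (only the bilinear $\Tr(Q^\trans P)$ is; $\Tr((Q^\trans P)^2)$ is quartic), so the inverse-factor identity genuinely needs its own proof from the coefficient condition $a(u\circ v)+a(v\circ u)=a(u)a(v)$. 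The paper closes this gap by a different route, backward error analysis: for $B=\nabla H^\trans$ Theorem~\ref{thm1} forces the modified equation into the factored collective form \eqref{eq:can_B_ham}; writing its right-hand side as a universal B-series of elementary differentials as in \eqref{eq:can_B_hamRHS} and using that the coefficients of a symplectic B-series are uniquely determined by Hamiltonian vector fields \cite[Thm~IX.9.10]{hlw} (together with the vanishing of exactly the $H(u\circ u)$ elementary Hamiltonians), the modified equation for arbitrary $B$ is obtained by the formal substitution $(H^{(k)})^\trans\mapsto B^{(k-1)}$, which preserves the factored form \eqref{eq:can_B_mod} and hence the Lax structure of $Q^\trans P$. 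Either adopt that transfer argument or actually carry out the tree-by-tree proof of the inverse-factor identity; as written, the corollary is not established.
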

\proof 
From Theorem~\ref{thm1} we know that for $B(W)=\nabla H(W)^\trans $ when we solve \eqref{eq:can_B} with a symplectic integrator the discrete flow is isospectral for $W:=Q^\trans P$. 
Therefore, the (truncated) modified equation is of the form
\begin{equation}\label{eq:can_B_ham}
\left. \begin{array}{ll} 
	\dot{Q} = Q \nabla\widetilde{H}(Q^\trans P)\\
	\dot{P} = - P \nabla\widetilde{H}(Q^\trans P)^\trans,
			\end{array}\right.
\end{equation} 
for some modified Hamiltonian $\widetilde{H}$. 
On the other hand, since $\Phi_h$ is a symplectic B-series method, the right hand side in \eqref{eq:can_B_ham} is a B-series whose coefficients satisfy the relation $b(u\circ v)+b(v\circ u)=0$ for each pair of trees $u,v$ \cite[ Theorem IX.9.3]{hlw}. 
In particular $b(u\circ u)=0$. 
From \cite[Lem~IX.9.6 and Thm~IX.9.8]{hlw} it follows that the only elementary Hamiltonians that vanish for all the Hamiltonian functions $H$ are those of the type $H(u\circ u)$.
Furthermore, it is clear from the form of \eqref{eq:can_B_ham} and the definition of B-series in terms of elementary differentials that the right hand side in \eqref{eq:can_B_ham} is of the form
\begin{equation}\label{eq:can_B_hamRHS}
\left( \begin{array}{ll} 
	Q \sum_{n=1}^\infty \sum_{k=1}^\infty A^k_n(Q^\trans P,H'(Q^\trans P),H''(Q^\trans P),\ldots,H^{(k)}(Q^\trans P))\\
	-P \sum_{n=1}^\infty \sum_{k=1}^\infty B^k_n(Q^\trans P,H'(Q^\trans P),H''(Q^\trans P),\ldots,H^{(k)}(Q^\trans P))^\trans,
			\end{array}\right)
\end{equation} 
for $A_n^k,B_n^k$ homogeneous polynomials of degree $n$ for each $k$.
 
We claim that to get the modified equation for a general $B$ we just replace in \eqref{eq:can_B_hamRHS} $(H^{(k)})^\trans $ with $B^{(k-1)}$ (which is possible since $k\geq 1$). 
Indeed, this follows since the coefficients of a symplectic B-series are uniquely determined by Hamiltonian vector fields \cite[Thm~IX.9.10]{hlw}.
Therefore, we conclude that a symplectic B-series integrator applied to \eqref{eq:can_B}, for a general $B$, is isospectral for $W=Q^\trans P$ with a modified equation of the form
\begin{equation}\label{eq:can_B_mod}
\left. \begin{array}{ll} 
	\dot{Q} = Q \widetilde{B}(Q^\trans P)^\trans  \\
	\dot{P} = - P \widetilde{B}(Q^\trans P),
			\end{array}\right.
\end{equation} 
for some $\widetilde{B}(\cdot)$ obtained by replacing in \eqref{eq:can_B_hamRHS} the $(H^{(k)})^\trans $ with $B^{(k-1)}$.

In the case when $\Phi_h$ is a symplectic P-series, the proof is repeated similarly, using instead \cite[Thm IX.10.3, Lem IX.10.6, Thm IX.10.8]{hlw}.
 \endproof
 
\section{Isospectral symplectic Runge--Kutta methods}\label{sec:isosyrk}

In this section we specialize Theorem~\ref{thm1} to the symplectic Runge--Kutta and partitioned Runge--Kutta methods. 
As a result, we obtain the novel numerical schemes for isospectral (Lie--Poisson) systems presented in Section~\ref{sec:main_results} above.

\subsection{Symplectic Runge--Kutta methods}

Given a Butcher tableau
\[
\renewcommand\arraystretch{1.2}
\begin{array}
{c|ccc}
c_1  & a_{11} &\dots & a_{1s} \\
\vdots & \vdots &\ddots & \vdots \\
c_s &a_{s1} &\dots & a_{ss}\\
\hline
& b_1 &\dots &b_s 
\end{array}
\]
the associated Runge--Kutta method for \eqref{iso_gen} is
\begin{equation}\label{eq:symplecticRK}
\begin{aligned} 
K^Q_i &= (Q_n+h\sum_{j=1}^s a_{ij} K^Q_j)B((Q_n+h\sum_{j=1}^s a_{ij} K^Q_j)^\trans (P_n+h\sum_{j=1}^s a_{ij} K^P_j))^\trans\\
\\
K^P_i &= - (P_n+h\sum_{j=1}^s a_{ij} K^P_j)B((Q_n+h\sum_{j=1}^s a_{ij} K^P_j)^\trans (P_n+h\sum_{j=1}^s a_{ij} K^P_j)) \\
\\ 
Q_{n+1} &= Q_n + h\sum_{i=1}^s b_i K^Q_i\\
\\
P_{n+1} &= P_n + h\sum_{i=1}^s b_i K^P_i,
\end{aligned}
\end{equation}
for $i,j=1,\ldots,s$. 
Recall that the method is symplectic, i.e., the discrete flow is a symplectic map, if $b_ia_{ij}+b_ja_{ji}=b_ib_j$ for any $i,j=1,\ldots,s$.

\begin{thm}\label{thm_RK1}
Given a Butcher tableau
\[
\renewcommand\arraystretch{1.2}
\begin{array}
{l|l}
\mathbf{c}  & \mathbf{A} \\
\hline
& \mathbf{b}^\top
\end{array}
\]
of a symplectic s-stages Runge--Kutta method, let $\Phi_h\colon T^*GL(n,\Cc) \to T^*GL(n,\Cc)$ denote the corresponding integrator map for the system~\eqref{ham1}.
Then:
\begin{enumerate}
\item The symplectic integrator $\Phi_h$ descends to a Lie--Poisson integrator $\phi_h$ on $\GL(n,\Cc)^*\simeq \GL(n,\Cc)$ for the isospectral Hamiltonian system~\eqref{eq:ham_isospectral}.
Furthermore, the map $\phi_h$ is completely constructive as an implicit integration scheme (see below for specific formulas).

\item If $S$ is an invariant subspace of \eqref{eq:ham_isospectral} (as described above), then $\phi_h$ preserves $S$ in the cases $S=\mathfrak{sl}(N,\mathbb{C})$, $S=\GG$, and $S=\GG^\bot$, for $\GG$ a $J$-quadratic Lie subalgebra.

\end{enumerate}


\end{thm}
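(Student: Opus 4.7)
The plan is to derive Theorem~\ref{thm_RK1} as an application of Theorem~\ref{thm1} to the symplectic Runge--Kutta discretization $\Phi_h$ of the extended canonical system \eqref{ham1}. Concretely, I would verify the two hypotheses of Theorem~\ref{thm1}: (i) $\Phi_h$ is equivariant with respect to the left cotangent action \eqref{eq:cotangentaction}, and, for part~(2), (ii) $\Phi_h$ preserves the foliation $\mathcal F_G$. Part~(1) then follows immediately once equivariance is in hand, and the ``constructive'' statement reduces to rewriting the RK stage equations \eqref{eq:symplecticRK} using only quantities that project onto $\GL(n,\Cc)^* \simeq \GL(n,\Cc)$.

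For equivariance, I would observe that the action $(Q,P)\mapsto(GQ,G^{-\trans}P)$ is linear and leaves the momentum $Q^\trans P$ invariant, so the vector field of \eqref{ham1} is equivariant because it is built from $\nabla H(Q^\trans P)$ acting by right multiplication on $Q$ and $P$. Since any Runge--Kutta method applied to an equivariant ODE under an affine action is itself equivariant (the stage system transforms covariantly term by term), $\Phi_h$ descends through the momentum map to a Poisson map $\phi_h$ on $\GL(n,\Cc)^*$. To realize $\phi_h$ explicitly, I would introduce auxiliary variables $X_i := h Q_n^\trans K^P_i$, $Y_i := h (K^Q_i)^\trans P_n$, together with a bookkeeping matrix $K_{ij}$ absorbing the bilinear cross-term $h^2 (K^Q_i)^\trans K^P_j$ that appears in $\widetilde W_i := \widetilde Q_i^\trans \widetilde P_i$. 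Expanding $\widetilde W_i$ and $W_{n+1} := Q_{n+1}^\trans P_{n+1}$ and collapsing all dependence on $(Q_n,P_n)$ into $W_n = Q_n^\trans P_n$, using the symplectic relation $b_i a_{ij}+b_j a_{ji} = b_i b_j$ to handle the quadratic cross-terms, should yield exactly the closed implicit scheme of Definition~\ref{def:isosyrk}.

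For part~(2), I would verify the foliation condition of Theorem~\ref{thm1}(2) case by case. When $S=\SL(n,\Cc)$, the normalizer is $\NN(S)=\GL(n,\Cc)$ and $N(S)=GL(n,\Cc)$, so $\mathcal F_G$ is the entire ambient space and preservation is automatic. When $S = \GG$ or $S = \GG^\bot$ for a $J$-quadratic Lie subalgebra $\GG$, the normalizer $N(S)$ is the associated quadratic matrix group (together with its central scalar extension), cut out by an equation of the form $g^\trans J g = \lambda J$. Hence the condition $G Q^\trans \in N(S)$ becomes a quadratic invariant in the cotangent variables $(Q,P)$, and symplectic Runge--Kutta methods are well known to preserve arbitrary quadratic invariants; therefore $\Phi_h$ preserves $\mathcal F_G$, and Theorem~\ref{thm1}(2) gives that $\phi_h$ restricts to $S$.

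The main obstacle will be the reformulation step: choosing auxiliary variables so that every occurrence of the ``upstairs'' variables $Q_n,P_n$ in the stage equations genuinely collapses into a function of $W_n$ alone. The cross-terms in $\widetilde W_i$ are quadratic in the stage increments, and it is precisely the symplectic condition on the Butcher tableau that lets one re-express them without introducing spurious coupling to $(Q_n,P_n)$; orchestrating this cancellation is the real content of the explicit IsoSyRK scheme. By comparison, the quadratic-invariant argument in the last paragraph is fairly routine once the normalizer groups have been correctly identified as quadratic matrix groups.
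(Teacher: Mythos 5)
Your proposal follows the same route as the paper: invoke Theorem~\ref{thm1}, establish equivariance of the Runge--Kutta map under the linear action \eqref{eq:cotangentaction}, realize the descended map explicitly via the substitutions $X_i = hQ_n^\trans K_i^P$, $Y_i = h(K_i^Q)^\trans P_n$ and a cross-term variable $K_{ij}$ collapsed using the symplecticity condition $b_ia_{ij}+b_ja_{ji}=b_ib_j$, and for part (2) use preservation of the quadratic invariant $Q^\trans JQ=J$ defining $N(S)$ to verify the foliation hypothesis. This matches the paper's proof in both structure and detail (the paper additionally records the identity $\sum_{j'}a_{ij'}\widetilde K_{jj'}=\sum_{j'}a_{jj'}K_{ij'}$ to eliminate the mirror variable $\widetilde K_{ij}$, which is the bookkeeping step you correctly flag as the real work).
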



The schemes obtained in Theorem~\ref{thm_RK1} are the following:

\subsubsection*{1. $S=\SL(n,\Cc)$ or $S=\GL(n,\Cc)$}
\[
\left. \begin{array}{llll} 
X_i = - h(W_n+\sum_{j=1}^s a_{ij} X_j)B(\widetilde{W}_i)  \\ 
\\
Y_i =  hB(\widetilde{W}_i) (W_n+\sum_{j=1}^s a_{ij} Y_j)  \\
\\
K_{ij} = hB(\widetilde{W}_i) (\sum_{j'=1}^s (a_{ij'}X_{j'}+a_{jj'}K_{ij'})) \\
\\
\widetilde{W}_i=W_n+\sum_{j=1}^s a_{ij} (X_j+Y_j+K_{ij}) \\
\\
W_{n+1} = W_n + h\sum_{i=1}^sb_i[B(\widetilde{W}_i),\widetilde{W}_i],
\end{array}\right. 
\]
for $i,j=1,\ldots,s$, where the unknowns are $X_i,Y_i,K_{ij}$ for $i,j=1,\ldots,s$ and the last two lines are explicit. 

\subsubsection*{2. $S=\GG\subseteq\GL(n,\Cc)$ $J$-quadratic}
\[
\left. \begin{array}{llll} 
X_i = - h(W_n+\sum_{j=1}^s a_{ij} X_j)B(\widetilde{W}_i)  \\
\\
K_{ij} = hB(\widetilde{W}_i) (\sum_{j'=1}^s (a_{ij'}X_{j'}+a_{jj'}K_{ij'}))\\
\\
\widetilde{W}_i=W_n+\sum_{j=1}^s a_{ij} (X_j-J^{-1}X_j^\trans J+K_{ij})  \\
\\
W_{n+1} = W_n + h\sum_{i=1}^sb_i[B(\widetilde{W}_i)^\trans ,\widetilde{W}_i], 
\end{array}\right. 
\]
for $i,j=1,\ldots,s$, where the unknowns are $X_i,K_{ij}$ for $i,j=1,\ldots,s$ and the last two lines are explicit. The last line is also equivalent to
\[
W_{n+1} = W_n + \sum_{i=1}^sb_i(X_i-J^{-1}X_j^\trans J+K_{ii}-J^{-1}K_{ii}^\trans J).
\]

\subsubsection*{3. $S=\GG^\bot$, $\GG\subseteq\SL(n,\Cc)$ $J$-quadratic}
\[
\left. \begin{array}{llll} 
X_i = - h(W_n+\sum_{j=1}^s a_{ij} X_j)B(\widetilde{W}_i)  \\
\\
K_{ij} = hB(\widetilde{W}_i) (\sum_{j'=1}^s (a_{ij'}X_{j'}+a_{jj'}K_{ij'}))  \\
\\
\widetilde{W}_i=W_n+\sum_{j=1}^s a_{ij} (X_j+J^{-1}X_j^\trans J+K_{ij})  \\
\\
W_{n+1} = W_n + h\sum_{i=1}^sb_i[B(\widetilde{W}_i) ,\widetilde{W}_i], 
\end{array}\right. 
\]
for $i,j=1,\ldots,s$, where the unknowns are $X_i,K_{ij}$ for $i,j=1,\ldots,s$ and the last two lines are explicit. The last line is also equivalent to
\[
W_{n+1} = W_n + \sum_{i=1}^sb_i(X_i+J^{-1}X_j^\trans J+K_{ii}+J^{-1}K_{ii}^\trans J).
\]

\proof[Proof of Theorem~\ref{thm_RK1}]
{\color{white}hej}
\begin{enumerate}
\item For $S:=\SL(n,\Cc)$ we have that $\NN(S)=\GL(n,\Cc)$ and $N(S)=GL(n,\Cc)$. 
Therefore the hypotheses of Theorem~\ref{thm1} are trivially satisfied.  
To get the explicit construction, we look at the argument of the gradient of the Hamiltonian which suggests to define
\begin{align*}
W_{n+1}&:= Q_{n+1}^\trans P_{n+1}\\
W_{n}&:= Q_{n}^\trans P_{n}\\
X_i&:=h Q_n^\trans K_i^P\\
Y_i&:=h(K_i^Q)^\trans P_n\\
K_{ij}&:= h^2\sum_{j'=1}^s a_{ij'}(K_j^Q)^\trans K_{j'}^P\\
\widetilde{W}_i&:=W_n+\sum_{j=1}^s a_{ij} (X_j+Y_j+K_{ij}),
\end{align*}
for $i,j=1,\ldots,s$. 
The equations for $X_i, Y_i$ are straightforward (consider the equations of the Runge--Kutta method \eqref{eq:symplecticRK} and take the transpose of the first equation and multiply by $P_n$, and multiply the second equation by $Q_n^\trans$, respectively).

To get the equations for $K_{ij}$, we first transpose the first equation of \eqref{eq:symplecticRK}, then we multiply it (indexed now by $j'$) by $h^2a_{ij'}K^P_{j'}$ and sum over $j'$. 
We thereby get
\[
K_{ij} = hB(\widetilde{W}_i)(\sum_{j'=1}^s (a_{ij'}X_{j'}+a_{ij'}\widetilde{K}_{jj'})) \qquad  \text{for } i,j=1,\ldots,s,
\]
where
\[
\widetilde{K}_{ij}:= h^2\sum_{j'=1}^s a_{ij'}(K_{j'}^Q)^\trans K_{j}^P.
\]
Multiplying the second equation of \eqref{eq:symplecticRK} (indexed now by $j'$) by $h^2a_{ij'}(K^Q_{j'})^\trans $ and then summing over $j'$, we obtain
\[
\widetilde{K}_{ij} = -h(\sum_{j'=1}^s (a_{ij'}Y_{j'}+a_{ij'}K_{jj'}))B(\widetilde{W}_i)   \qquad  \text{for } i,j=1,\ldots,s.
\]
Using then  
\[
\sum_{j'=1}^s\sum_{j''=1}^s a_{ij'}a_{jj''}(K_{j''}^Q)^\trans K_{j'}^P = \sum_{j'=1}^s\sum_{j''=1}^s a_{jj'}a_{ij''}(K_{j'}^Q)^\trans K_{j''}^P,
\]
for $i=1,\ldots,s$, we get
\[
\sum_{j'=1}^s a_{ij'}\widetilde{K}_{jj'} = \sum_{j'=1}^s a_{jj'} K_{ij'}  \qquad  \text{for } i,j=1,\ldots,s.
\]
Therefore the $\widetilde{K}_{ij}$ depend completely on the $K_{ij}$ and so we can neglect them, obtaining the desired equations for the $K_{ij}$.
Finally, to get the equation for $W_{n+1}$, we multiply the third one of (\ref{eq:symplecticRK}) transposed with the fourth one of (\ref{eq:symplecticRK})
and we get
\[
W_{n+1} = W_n + \sum_{i=1}^s b_i(X_i + Y_i) + h^2\sum_{i,j=1}^s b_ib_j(K^Q_i)^\trans  K^P_j.
\]
Using the symplecticity of the method, the last term becomes
\[
h^2\sum_{i,j=1}^s (b_ia_{ij}+b_ja_{ji})(K^Q_i)^\trans  K^P_j=\sum_{i=1}^s b_i(K_{ii}+\widetilde{K}_{ii}).
\]
Therefore,
\[
W_{n+1} = W_n + \sum_{i=1}^s b_i(X_i + Y_i + K_{ii}+\widetilde{K}_{ii}).
\]
Now substituting the equations found for $X_i,Y_i,K_{ii},\widetilde{K}_{ii}$ we get the desired equation for $W_{n+1}$.

\item
Symplectic Runge--Kutta methods preserve exactly the strong quadratic first integrals of a dynamical system. 
In particular, when $S$ is one of the spaces stated in the theorem, they preserve $N(S)=\lbrace Q\in GL(n,\Cc)|Q^\trans JQ=J\rbrace$. 
Therefore, by Theorem~\ref{thm1}, they descend to an integrator on $S$. 

It is also easy to check that, if we assume $B^\trans $ to be in $\NN(S)$, we get $Y_i=-J^{-1}X_i^\trans J$. Moreover, from the definition of $K_{ij}$ and $\widetilde{K}_{ij}$  and the equations:
\begin{align*}
K_{ij} &= hB(\widetilde{W}_i)(\sum_{j'=1}^s (a_{ij'}X_{j'}+a_{jj'}K_{ij'}))  \hspace{1cm}  \mbox{for } i,j=1,\ldots,s,\\
\widetilde{K}_{ij}&= -h(\sum_{j'=1}^s (a_{ij'}Y_{j'}+a_{ij'}K_{jj'}))B(\widetilde{W}_i)     \hspace{1cm}  \mbox{for } i,j=1,\ldots,s,
\end{align*} we get also that 
\[
-J^{-1}K_{ii}^\trans J=\widetilde{K}_{ii}.
\]
%
\endproof

\end{enumerate}

\begin{rem}
We stress that our methods are \emph{not} intrinsically formulated on~$S$.
That is, they depend on how $S$ is embedded as a subspace in $\GL(n,\Cc)$.
Therefore, there is no hope to present the schemes above only in terms of the matrix commutator.
\end{rem}

\begin{rem}
The order of convergence of the descended methods is the same as the underlying Runge--Kutta ones (see Figure~\ref{fig:rberr}), since if $Q_n=Q(nh)+\mathcal{O}(h^p)$ and $P_n=P(nh)+\mathcal{O}(h^p)$, then $W_n=Q_n^\trans P_n = W(nh) + \mathcal{O}(h^p)=Q(nh)^\trans P(nh) + \mathcal{O}(h^p)$.
\end{rem}

\begin{figure}
\begin{tikzpicture}
\node (img)  {\includegraphics[width=0.99\textwidth]{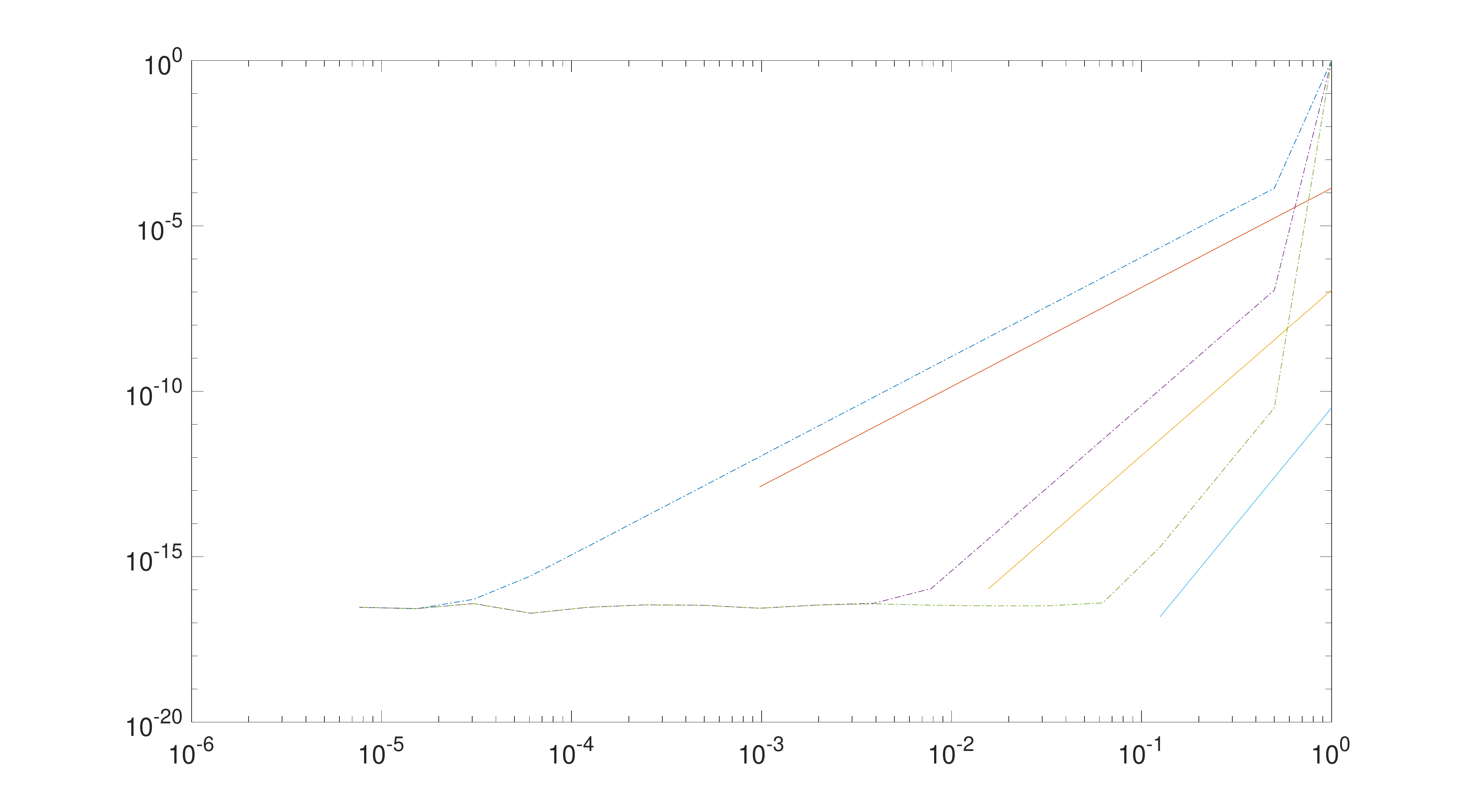}};
\centering
  \node[above=of img, yshift=-4em] {Error diagram for 2$^{nd}$, 4$^{th}$, 6$^{th}$ order schemes in Definition~\ref{def:isosyrk}};
 \node[below=of img, node distance=0cm, xshift=-6cm, yshift=4cm, rotate=90] {Error $\max_n\|W(nh)-W_n\|$};
  \node[below=of img,anchor=base, yshift=3em]{Time-step $h$};
 \end{tikzpicture}
\caption{Maximum error in total time $T=1s$, and time-step $h$, for $h=1, 0.5^{2},\dots,0.5^{17}$, in loglog scale, for 2$^{nd}$, 4$^{th}$, 6$^{th}$ order schemes in def~\ref{def:isosyrk}, respectively with dashed blue, purple and green line, applied to the generalized rigid body of section 5.1. The continuous lines are, respectively, red $h\mapsto h^3$, yellow $h\mapsto h^5$, blue $h\mapsto h^7$.}\label{fig:rberr}
\end{figure}

\subsection{Partitioned symplectic Runge--Kutta methods}

Given two Butcher tableaux
\[
\renewcommand\arraystretch{1.2}
\begin{array}
{c|ccc}
\widehat{c}_1  & \widehat{a}_{11} &\dots & \widehat{a}_{1s} \\
\vdots & \vdots &\ddots & \vdots \\
\widehat{c}_s &\widehat{a}_{s1} &\dots & \widehat{a}_{ss}\\
\hline
& \widehat{b}_1 &\dots &\widehat{b}_s 
\end{array}
\hspace{2cm}
\begin{array}
{c|ccc}
c_1  & a_{11} &\dots & a_{1s} \\
\vdots & \vdots &\ddots & \vdots \\
c_s &a_{s1} &\dots & a_{ss}\\
\hline
& b_1 &\dots &b_s 
\end{array}
\]
the associated partitioned Runge--Kutta method for (\ref{ham1}) is given by
\[
\left. \begin{array}{llll} 
K^Q_i = (Q_n+h\sum_{j=1}^s \widehat{a}_{ij} K^Q_j)B((Q_n+h\sum_{j=1}^s \widehat{a}_{ij} K^Q_j)^\trans (P_n+h\sum_{j=1}^s a_{ij} K^P_j))^\trans \\
\\
K^P_i = - (P_n+h\sum_{j=1}^s a_{ij} K^P_j)B((Q_n+h\sum_{j=1}^s \widehat{a}_{ij} K^Q_j)^\trans (P_n+h\sum_{j=1}^s a_{ij} K^P_j))   \\
\\ 
Q_{n+1} = Q_n + h\sum_{i=1}^s \widehat{b}_i K^Q_i\\
\\
P_{n+1} = P_n + h\sum_{i=1}^s b_i K^P_i,
\end{array}\right. 
\]
for $i,j=1,\ldots,s$. The partitioned Runge--Kutta method is \textit{symplectic}, i.e., the discrete flow is a symplectic map, if $b_i\widehat{a}_{ij}+\widehat{b}_ja_{ji}=b_i\widehat{b}_j$ and $\widehat{b}_i=b_i$ for $i,j=1,\ldots,s$.

\begin{thm}\label{thm_RK2}
Given two Butcher tableaux
\[
\renewcommand\arraystretch{1.2}
\begin{array}
{l|l}
\hat{\mathbf{c}}  & \hat{\mathbf{A}} \\
\hline
& \hat{\mathbf{b}}^\top  
\end{array}
\hspace{2cm}
\begin{array}
{l|l}
\mathbf{c}  & \mathbf{A} \\
\hline
& \mathbf{b}^\top 
\end{array}
\]
of a symplectic partitioned s-stage Runge--Kutta method, let $\Phi_h\colon T^*GL(n,\Cc) \to T^*GL(n,\Cc)$ denote the corresponding integrator map for the system~\eqref{ham1}.
Then:
\begin{enumerate}
\item The symplectic integrator $\Phi_h$ descends to a Lie--Poisson integrator $\phi_h$ on $\GL(n,\Cc)^*\simeq \GL(n,\Cc)$ for the isospectral Hamiltonian system~\eqref{eq:ham_isospectral}.
Furthermore, the map $\phi_h$ is completely constructive as an implicit integration scheme (see below for a specific formula).

\item If $S=\mathfrak{sl}(N,\Cc)$ is an invariant subspace of \eqref{eq:ham_isospectral} (as described above), then $\phi_h$ preserves $S$.

\item If $S=\GG$, and $S=\GG^\bot$, for $\GG$ a $J$-quadratic Lie subalgebra and $b_i\neq 0$, then $\phi_h$ preserves $S$ (for general Hamiltonians on $S$ extended to $\GL(n,\Cc)$) if and only if $a_{ij}=\widehat{a}_{ij}$, for $i,j=1,\ldots,s$.


 
\end{enumerate}
 
\end{thm}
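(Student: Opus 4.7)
The plan is to mirror the proof of Theorem~\ref{thm_RK1} and invoke Theorem~\ref{thm1} as the central engine, making the adjustments dictated by the partitioned structure. For part (1), I would first verify $GL(n,\Cc)$-equivariance of the partitioned scheme under the left action \eqref{eq:cotangentaction}: the argument $Q^\trans P$ of $B$ is invariant because $G^\trans (G^{-1})^\dagger = I$, and each stage equation is linear in $(Q_n,P_n)$, so $K_i^Q\mapsto GK_i^Q$ and $K_i^P\mapsto (G^{-1})^\dagger K_i^P$ under the action. Theorem~\ref{thm1}(1) then yields the descent to a Lie--Poisson integrator on $\GL(n,\Cc)^*$. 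To extract the explicit formula of Definition~\ref{def:isosyrk_part}, I would introduce the same auxiliary variables as in the proof of Theorem~\ref{thm_RK1},
\[
  W_n:=Q_n^\trans P_n,\quad X_i:=h Q_n^\trans K_i^P,\quad Y_i:=h (K_i^Q)^\trans P_n,\quad K_{ij}:=h^2\sum_{j'=1}^{s}\widehat a_{jj'}(K_{j'}^Q)^\trans K_j^P,
\]
and repeat the same bookkeeping, observing that the $K_i^Q$-equations now carry $\widehat a_{ij}$ while the $K_i^P$-equations carry $a_{ij}$. The decisive step is rewriting $h^2\sum_{i,j}b_ib_j(K_i^Q)^\trans K_j^P$ using the partitioned symplecticity identity \eqref{eq:sympl_condition_part} together with $\widehat b_i=b_i$; this is precisely what collapses the double sum into the $K_{ii}$ and their transposed counterparts and produces the final commutator update.

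Part (2) is then automatic: for $S=\mathfrak{sl}(n,\Cc)$ the normalizer $N(S)=GL(n,\Cc)$, so the foliation hypothesis of Theorem~\ref{thm1}(2) is vacuous, and tracelessness of $[B(\widetilde W_i),\widetilde W_i]$ preserves $\mathfrak{sl}$. The substantive content is part (3). For a $J$-quadratic $\GG$, preservation of the foliation $\mathcal F_G$ is equivalent to preservation of the general quadratic first integral $Q\mapsto Q^\trans J Q$ along the $Q$-dynamics. The ``if'' direction is immediate: when $a_{ij}=\widehat a_{ij}$ the partitioned method reduces to an ordinary symplectic Runge--Kutta scheme and Theorem~\ref{thm_RK1}(2) applies directly. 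For the ``only if'' direction, I would expand $Q_{n+1}^\trans J Q_{n+1}-Q_n^\trans J Q_n$ in the stages $K_i^Q$ and read off the classical quadratic-invariant condition for the $Q$-tableau,
\[
  \widehat b_i\widehat a_{ij}+\widehat b_j\widehat a_{ji}=\widehat b_i\widehat b_j\quad (i,j=1,\dots,s).
\]
Combined with the partitioned symplecticity \eqref{eq:sympl_condition_part}, subtraction of the two identities together with $\widehat b_i=b_i\neq 0$ then forces $a_{ij}=\widehat a_{ij}$.

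The main obstacle is justifying the necessity argument in part (3): one must show that as the Hamiltonian $H$ ranges over functions on $\GG$ extended to $\GL(n,\Cc)$, the resulting stage vectors $K_i^Q$ span enough of $\GL(n,\Cc)$ for the identity above to be isolated coefficient by coefficient, rather than holding only modulo algebraic cancellations peculiar to $\GG$. Since the extension renders $\nabla H$ essentially arbitrary on a neighbourhood of $\GG$ in $\GL(n,\Cc)$, a local perturbation argument around a generic initial $(Q_n,P_n)$ should deliver the required genericity and close the proof; the key technical point is that ``general Hamiltonian on $S$ extended to $\GL(n,\Cc)$'' is strong enough to rule out any accidental preservation that would otherwise allow $a_{ij}\neq\widehat a_{ij}$.
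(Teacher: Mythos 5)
Your proposal follows essentially the same route as the paper: part (1) by repeating the reduction of Theorem~\ref{thm_RK1} with the two tableaux and the partitioned symplecticity condition \eqref{eq:sympl_condition_part}, part (2) from the commutator form of the update, and part (3) by noting that preservation of the purely-$Q$ quadratic invariant $Q^\trans JQ$ forces the $Q$-tableau to satisfy the ordinary quadratic-invariant condition, which combined with \eqref{eq:sympl_condition_part} and $\widehat b_i=b_i\neq 0$ yields $a_{ij}=\widehat a_{ij}$. The only quibble is a bookkeeping slip in your definition of $K_{ij}$ (the inner sum should carry the $a_{ij'}$ coefficients so that $\sum_j\widehat a_{ij}K_{ij}$ reproduces the cross term $h^2\sum_{j,j'}\widehat a_{ij}a_{ij'}(K_j^Q)^\trans K_{j'}^P$ in $\widetilde Q_i^\trans\widetilde P_i$), which does not affect the argument.
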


The scheme obtained in Theorem~\ref{thm_RK2} is the following:

\subsubsection*{$S=\SL(n,\Cc)$ or $S=\GL(n,\Cc)$}
\[
\left. \begin{array}{llll} 
X_i = - h(W_n+\sum_{j=1}^s a_{ij} X_j)B(\widetilde{W}_i)   \\
\\
Y_i =  hB(\widetilde{W}_i) (W_n+\sum_{j=1}^s \widehat{a}_{ij} Y_j)  \\
\\
K_{ij} = hB(\widetilde{W}_i) (\sum_{j'=1}^s (a_{ij'}X_{j'}+\widehat{a}_{jj'}K_{ij'})) \\
\\
\widetilde{W}_i=W_n+\sum_{j=1}^s a_{ij} X_j+\widehat{a}_{ij} (Y_j+K_{ij}) \\
\\
W_{n+1} = W_n + h\sum_{i=1}^sb_i[B(\widetilde{W}_i) ,\widetilde{W}_i],
\end{array}\right. 
\]
for $i,j=1,\ldots,s$, where the unknowns are $X_i,Y_i,K_{ij}$ for $i,j=1,\ldots,s$ and the last two lines are explicit. 

\proof[Proof of Theorem~\ref{thm_RK2}]
{\color{white}hej}
\begin{enumerate}
\item The proof is, mutatis mutandis, identical to the one of the previous theorem. 
We have just to change accordingly the following definitions:

\begin{align*}
\widetilde{K}_{ij}&:= h^2\sum_{j'=1}^s \hat{a}_{ij'}(K_j^Q)^\trans K_{j'}^P\\
K_{ij}&:= h^2\sum_{j'=1}^s a_{ij'}(K_j^Q)^\trans K_{j'}^P\\
\widetilde{W}_i&:=W_n+\sum_{j=1}^s a_{ij} X_j+\widehat{a}_{ij} (Y_j+K_{ij}),
\end{align*}
and pointing out the following identity:
\[
\sum_{j'=1}^s a_{ij'}\widetilde{K}_{jj'} = \sum_{j'=1}^s \widehat{a}_{jj'} K_{ij'}  \hspace{1cm}  \mbox{for } i,j=1,\ldots,s.
\]
Finally we just use the condition of symplecticity for partitioned Runge--Kutta methods.

\item Follows directly from the formula for $W_{k+1}$.

\item  Partitioned symplectic Runge--Kutta methods preserve exactly the strong quadratic first integrals of a dynamical system if they are on the form $a(Q,P)$, where $a$ is a bilinear form on the space of matrices.
In particular, when $S$ is one of the spaces in statement (3) of the theorem, to preserve $N(S)=\lbrace Q\in GL(n,\Cc)|Q^\trans JQ=J\rbrace$ the method associated to $Q$-part has to preserve already the quadratic first integrals. This fact, together the condition of symplecticity of the partitioned Runge--Kutta methods, implies that $a_{ij}=\widehat{a}_{ij}$, for $i,j=1,\ldots,s$ whenever $b_i\neq 0$. 
\endproof
\end{enumerate} 

%
%
%

\subsection{Linear equivariance of the schemes}
In this paragraph we prove that the isospectral symplectic Runge--Kutta methods in Theorem~\ref{thm_RK1} and Theorem~\ref{thm_RK2} are linearly equivariant with respect to the invertible linear transformations that leave equations \eqref{iso_gen} and \eqref{eq:ham_isospectral} of the same form. 
Notice that the equations \eqref{iso_gen} and \eqref{eq:ham_isospectral} are not affine equivariant; linear equivariance is the best we can expect. 
The linear isomorphisms that leave equations \eqref{iso_gen} and \eqref{eq:ham_isospectral} invariant in form are, respectively, Lie algebra isomorphisms and Lie--Poisson isomorphisms. 
Indeed, consider a Lie algebra isomorphism $\mathcal A\colon \GL(n,\Cc)\to\GL(n,\Cc)$. 
Applying $\mathcal A$ to equation \eqref{iso_gen} gives
\[
\dfrac{d}{dt}(\mathcal {A} W) = \mathcal {A}[B(W),W] = [\mathcal {A}B(W),\mathcal {A}W]=[(\mathcal {A}\circ B\circ \mathcal {A}^{-1})(\mathcal {A}W),\mathcal {A}W],
\]
which shows the invariance in form of equation \eqref{iso_gen} to Lie algebra isomorphism. 
In particular, we have the identity
\begin{equation}\label{eq:Lie_mor}
\mathcal {A}[B(\mathcal {A}^{-1}W),\mathcal {A}^{-1}W] = [(\mathcal {A}\circ B\circ \mathcal {A}^{-1})(W),W].
\end{equation} 
Via the identification $\GL(n,\Cc)^*\simeq \GL(n,\Cc)$ as previously explained, it is easy to check that the adjoint operator $\mathcal A^*\colon \GL(n,\Cc)^*\to\GL(n,\Cc)^*$ acts on the coadjoint representation like $\mathcal {A}^*[X^\trans,Y] = [X^\trans(\mathcal {A}^*)^{-1},\mathcal {A}^*Y]$, for $X\in \GL(n,\Cc)$ and $Y\in\GL(n,\Cc)^*$. In particular, $\mathcal {A}^*$ is a Lie--Poisson map for equation \eqref{eq:ham_isospectral}:
\begin{align*}
\dfrac{d}{dt}(\mathcal {A}^* W) &= \mathcal {A}^*[\nabla H(W)^\trans,W] = [\nabla H(W)^\trans(\mathcal {A}^*)^{-1},\mathcal {A}^*W] \\ &= [\nabla (H\circ (\mathcal {A}^*)^{-1})^\trans(\mathcal {A}^*W)^\trans,\mathcal {A}^*W],
\end{align*}
which leaves equation \eqref{eq:ham_isospectral} invariant in form. 
We thus obtain the following identity:
\begin{equation}\label{eq:Lie--Poiss_mor}
\mathcal A^* [\nabla H((\mathcal A^*)^{-1}W)^\trans ,\mathcal (\mathcal A^*)^{-1}W] =  [\nabla( H\circ (\mathcal A^*)^{-1})(W)^\trans ,W].
\end{equation} 
For any map $B$ and Hamiltonian $H$, let $\phi_h(B)$ and $\phi_h(H)$, respectively, denote integrators as in Theorem~\ref{thm_RK1} or Theorem~\ref{thm_RK2}.
Now, the numerical scheme $\phi_h(B)$ is \emph{Lie equivariant}, and, correspondingly, $\phi_h(H)$ is \emph{Lie--Poisson equivariant} if
\begin{equation}\label{affine_equiv}
\mathcal A \circ \phi_h(B) = \phi_h(\mathcal A \circ B\circ\mathcal A^{-1}) \circ\mathcal A\\
\end{equation}
\begin{equation}\label{affine_equiv2}
\mathcal A^* \circ \phi_h(H)  = \phi_h(H\circ(\mathcal A^*)^{-1})\circ\mathcal A^*.
\end{equation}
The identities \eqref{eq:Lie_mor} and \eqref{eq:Lie--Poiss_mor} show that the right hand sides of equations \eqref{affine_equiv},\eqref{affine_equiv2} have the same form. Therefore, it is enough to prove equation \eqref{affine_equiv}. 


\begin{thm}\label{thm:affine_equiv}
	Let $\phi_h(B)$ be an isospectral (partitioned) symplectic Runge--Kutta method as in Theorem~\ref{thm_RK1} (or Theorem~\ref{thm_RK2}).
	Then $\phi_h(B)$ is Lie equivariant for any Lie morphism $\mathcal A\colon \GL(n,\Cc)\to\GL(n,\Cc)$.
\end{thm}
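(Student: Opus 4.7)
Set $B' := \mathcal{A}\circ B\circ \mathcal{A}^{-1}$ and $W_k' := \mathcal{A}W_k$, and let $(X_i',Y_i',K_{ij}',\widetilde{W}_i',W_{k+1}')$ denote the intermediate and output variables produced by running the scheme of Theorem~\ref{thm_RK1} (respectively Theorem~\ref{thm_RK2}) with input $(W_k',B')$. The plan is to verify by direct substitution that the tuple $(\mathcal{A}X_i,\mathcal{A}Y_i,\mathcal{A}K_{ij},\mathcal{A}\widetilde{W}_i)$ solves the implicit system defining $\phi_h(B')(W_k')$; by uniqueness of that solution for sufficiently small $h$ (implicit function theorem around $h=0$), this matching then propagates to the final update and gives $W_{k+1}' = \mathcal{A}W_{k+1}$, which is \eqref{affine_equiv}. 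The Lie--Poisson version \eqref{affine_equiv2} follows immediately from the identity \eqref{eq:Lie--Poiss_mor} noted just above the theorem.

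The structurally linear and bracket-only equations are the easy part. Applying $\mathcal{A}$ to the stage definition $\widetilde{W}_i = W_k + \sum_j a_{ij}(X_j+Y_j+K_{ij})$ (or its partitioned analogue) distributes termwise by linearity and reproduces the stage equation for $\phi_h(B')$. For the update $W_{k+1} = W_k + h\sum_i b_i[B(\widetilde{W}_i),\widetilde{W}_i]$, linearity of $\mathcal{A}$ together with the Lie morphism property $\mathcal{A}[U,V] = [\mathcal{A}U,\mathcal{A}V]$ and the identity $B'(\mathcal{A}\widetilde{W}_i) = \mathcal{A}\,B(\widetilde{W}_i)$ (immediate from the definition of $B'$) reproduce the transformed update formula exactly.

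The core technical step is the stage equations $X_i = -(W_k+\sum_j a_{ij}X_j)\,hB(\widetilde{W}_i)$, $Y_i = hB(\widetilde{W}_i)(W_k + \sum_j a_{ij}Y_j)$, and $K_{ij} = hB(\widetilde{W}_i)\sum_{j'}(a_{ij'}X_{j'} + a_{jj'}K_{ij'})$, since these involve the associative matrix product rather than only the Lie bracket. The key fact to invoke here is that every invertible linear Lie morphism of the matrix algebra $\GL(n,\Cc)$ is, up to the outer transposition involution, an inner conjugation $M\mapsto gMg^{-1}$, which is also an associative algebra morphism and therefore satisfies $\mathcal{A}(UV)=\mathcal{A}(U)\mathcal{A}(V)$. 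Distributing such an $\mathcal{A}$ through the products in the $X_i, Y_i, K_{ij}$ equations and using $B'(\mathcal{A}\widetilde{W}_i)=\mathcal{A}B(\widetilde{W}_i)$ then turns each stage equation into its counterpart with $(B,W_k)$ replaced by $(B',W_k')$; the partitioned case is handled identically after replacing $a_{ij}$ by $\widehat a_{ij}$ where appropriate.

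The step I expect to be the main obstacle is precisely this compatibility of $\mathcal{A}$ with the non-bracket matrix products: the schemes are explicitly non-intrinsic to the Lie-algebra structure (cf.\ the remark following Theorem~\ref{thm_RK1}), so pure bracket-preservation is a priori not enough. Either one reduces to inner automorphisms as above, or one treats the transposition automorphism separately, exploiting that transposition reverses product order and that the stage equations for $X_i$ and $Y_i$ are conjugate under this reversal. Once this product-compatibility is granted, the substituted tuple solves the transformed implicit system, uniqueness finishes the proof, and \eqref{affine_equiv2} drops out from \eqref{eq:Lie--Poiss_mor}.
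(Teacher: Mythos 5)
Your overall strategy coincides with the paper's: write out the implicit system defining $\phi_h(\mathcal A\circ B\circ\mathcal A^{-1})$ at $\mathcal A W_k$, check that the $\mathcal A$-images of the original stage variables solve it, and conclude $W_{k+1}'=\mathcal A W_{k+1}$. (The paper phrases the last step as a relabeling $X_i:=\mathcal A^{-1}X_i$, $Y_i:=\mathcal A^{-1}Y_i$, $K_{ij}:=\mathcal A^{-1}K_{ij}$ rather than as an appeal to uniqueness of the implicit solution, but that difference is cosmetic.) Where you genuinely go beyond the paper is in flagging that the stage equations contain associative products, so the verification needs $\mathcal A(UV)=\mathcal A(U)\mathcal A(V)$ and not merely $\mathcal A[U,V]=[\mathcal A U,\mathcal A V]$: the paper's displayed manipulations tacitly pull $\mathcal A$ in and out of products such as $\bigl(W_k+\sum_j a_{ij}X_j\bigr)B(\widetilde W_i)$, which is precisely this multiplicativity. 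Your diagnosis of the weak point is correct and is not addressed in the paper's own argument.

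Your proposed repair, however, is not yet complete. First, the invertible linear Lie morphisms of $\GL(n,\Cc)$ are \emph{not} exhausted by conjugations and conjugation composed with $X\mapsto -X^\top$: since $\Tr$ vanishes on brackets and $\mathrm{Id}$ is central, maps such as $X\mapsto X+\lambda\Tr(X)\,\mathrm{Id}$ (invertible when $1+\lambda n\neq 0$) satisfy $\mathcal A[X,Y]=[\mathcal A X,\mathcal A Y]$ but are neither multiplicative nor anti-multiplicative; a reduction to inner automorphisms must therefore first dispose of the central component, and the theorem as stated (``any Lie morphism'') includes these cases. Second, the order-reversing case is more than a remark. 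With $\mathcal A(X)=-X^\top$ the correct ansatz is $X_i'=\mathcal A Y_i$, $Y_i'=\mathcal A X_i$, $\widetilde W_i'=\mathcal A\widetilde W_i$ and $K_{ij}'=\mathcal A\widetilde K_{ij}$, where the $\widetilde K_{ij}$ are the auxiliary quantities eliminated in the proof of Theorem~\ref{thm_RK1}; closing the $K$-equation and the stage equation then requires the symplecticity identity $\sum_{j'}a_{ij'}\widetilde K_{jj'}=\sum_{j'}a_{jj'}K_{ij'}$ (with $i,j$ swapped, and its $i=j$ specialization $\sum_{j'}a_{ij'}\widetilde K_{ij'}=\sum_{j'}a_{ij'}K_{ij'}$). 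The verification does go through, but only after this swap is tracked explicitly — it is not an immediate consequence of the $X$/$Y$ symmetry as your sketch suggests. In short: same route as the paper, a sharper identification of the gap in it, but the filling of that gap still needs the two items above.
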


\begin{proof}
Let us consider equation \eqref{affine_equiv} for the partitioned symplectic Runge--Kutta schemes. 
The same conclusion for the symplectic Runge--Kutta method will follow straightforwardly from this.
We want to check equation \eqref{affine_equiv} for any $W_n\in\GL(n,\Cc)$ and $\mathcal A$ as above.
The right hand side is
\[
\left. \begin{array}{llll} 
X_i = - h\mathcal A(\mathcal A^{-1}(\mathcal AW_n+\sum_{j=1}^s a_{ij} X_j))B(\mathcal A^{-1}\widetilde{W}_i)    \\
\\
Y_i =  h\mathcal AB(\mathcal A^{-1}\widetilde{W}_i)  (\mathcal A^{-1}(AW_n+\sum_{j=1}^s \widehat{a}_{ij} Y_j)) \\
\\
K_{ij} = hAB(\mathcal A^{-1}\widetilde{W}_i)  (\sum_{j'=1}^s (\mathcal A^{-1}(a_{ij'}X_{j'}+\widehat{a}_{jj'}K_{ij'})))  \\
\\
\widetilde{W}_i=\mathcal AW_n+\sum_{j=1}^s a_{ij} X_j+\widehat{a}_{ij} (Y_j+K_{ij}) \\
\\
W_{n+1} = \mathcal AW_n + h\sum_{i=1}^sb_i\mathcal A[B(A^{-1}\widetilde{W}_i)  ,\mathcal A^{-1}\widetilde{W}_i], 
\end{array}\right. 
\]
for $i,j=1,\ldots,s$, which is equivalent to
\[
\left. \begin{array}{llll} 
\mathcal A^{-1}X_i = - h(W_n+\sum_{j=1}^s a_{ij} \mathcal A^{-1}X_j)B(\mathcal A^{-1}\widetilde{W}_i)   \\
\\
\mathcal A^{-1}Y_i =  hB(\mathcal A^{-1}\widetilde{W}_i)  (W_n+\sum_{j=1}^s \widehat{a}_{ij} \mathcal A^{-1}Y_j) \\
\\
\mathcal A^{-1}K_{ij} = hB(\mathcal A^{-1}\widetilde{W}_i)  (\sum_{j'=1}^s (a_{ij'}\mathcal A^{-1}X_{j'}+\widehat{a}_{jj'}\mathcal A^{-1}K_{ij'})) \\
\\
\mathcal A^{-1}\widetilde{W}_i=W_n+\sum_{j=1}^s a_{ij} \mathcal A^{-1}X_j+\widehat{a}_{ij} (\mathcal A^{-1}Y_j+\mathcal A^{-1}K_{ij})\\
\\
W_{n+1} = \mathcal A(W_n+ h\sum_{i=1}^sb_i[B(\mathcal A^{-1}\widetilde{W}_i)  ,\mathcal A^{-1}\widetilde{W}_i]),
\end{array}\right. 
\]
for $i,j=1,\ldots,s$. 
Relabeling $X_i:=\mathcal A^{-1}X_i,Y_i=\mathcal A^{-1}Y_i,K_{ij}:=A^{-1}K_{ij},\widetilde{W}_i:=\mathcal A^{-1}\widetilde{W}_i$ we get
\[
\left. \begin{array}{llll} 
X_i = - h(W_n+\sum_{j=1}^s a_{ij} X_j)B(\widetilde{W}_i)    \\
\\
Y_i =  hB(\widetilde{W}_i)  (W_n+\sum_{j=1}^s \widehat{a}_{ij} Y_j)  \\
\\
K_{ij} = hB(\widetilde{W}_i)  (\sum_{j'=1}^s (a_{ij'}X_{j'}+\widehat{a}_{jj'}K_{ij'}))  \\
\\
\widetilde{W}_i=W_n+\sum_{j=1}^s a_{ij} X_j+\widehat{a}_{ij} (Y_j+K_{ij}) \\
\\
W_{n+1} = \mathcal A(W_n + h\sum_{i=1}^sb_i[B(\widetilde{W}_i)  ,\widetilde{W}_i]) ,
\end{array}\right. 
\]
for $i,j=1,\ldots,s$ which is exactly the left hand side of (\ref{affine_equiv}).
\end{proof}

\section{Numerical examples} \label{sec:examples}

In this section we demonstrate the Isospectral Symplectic Runge--Kutta methods on some Hamiltonian isospectral flows often seen in the literature.
As expected, we obtain near conservation of the Hamiltonian (owing to the symplectic quality) and exact conservation (up to round-off errors) of the Casimir functions (owing to the isospectral quality).%
\footnote{The numerical experiments in this section are implemented in an easy-to-use MATLAB code, available at \href{https://bitbucket.org/Milo_Viviani/iso-runge-kutta}{\texttt{bitbucket.org/Milo\_Viviani/iso-runge-kutta}}.}

\subsection{The generalized rigid body} \label{sec:rigidbody}
The core example among Hamiltonian isospectral systems is the generalized rigid body. 
It is known that in any dimension $n$ it forms a complete integrable system in $\SO(n)$, as proved by Manakov~\cite{man}. 
The Hamiltonian is given by
\begin{equation}\label{eq:ham_rb}
H(W) = \frac{1}{2}\Tr((\mathcal{I}^{-1}W)^\trans W),\qquad W\in\SO(n),
\end{equation}
where $\mathcal{I}\colon\SO(n)\rightarrow\SO(n)$ is a symmetric positive definite inertia tensor.
The equations of motion are then
\begin{equation*}\label{rigid_body}
\left. \begin{array}{ll} 
	\dot{W} = -[\mathcal{I}^{-1}W,W] \\
	 W(0)=W_0.
			\end{array}\right.
\end{equation*} 

We discretize this system for $n=10$ with the method in Theorem~\ref{thm_RK1} and with the Butcher tableau corresponding to the implicit midpoint method.
Our implementation uses Newton iterations for the non-linear system.
The inertia tensor is given by 
\begin{equation*}
	(\mathcal{I}^{-1}W)_{ij} = \frac{W_{ij}}{i} , \quad i,j=1,\ldots,10
\end{equation*}
and we use the stepsize $h=0.1$.
The initial conditions are given by
\begin{equation*}
	(W_0)_{ij}=1/10 \quad\text{for}\quad i<j \qquad\text{and}\quad W_0^\trans = -W_0
\end{equation*}

As shown in Figure~\ref{fig:rb}, the Hamiltonian is nearly conserved and the Casimir functions are conserved up to the accuracy of the Newton iterations. 

\begin{figure}
\begin{minipage}{1\textwidth}
\begin{tikzpicture}
\centering
 \node (img)  {\includegraphics[scale=0.3]{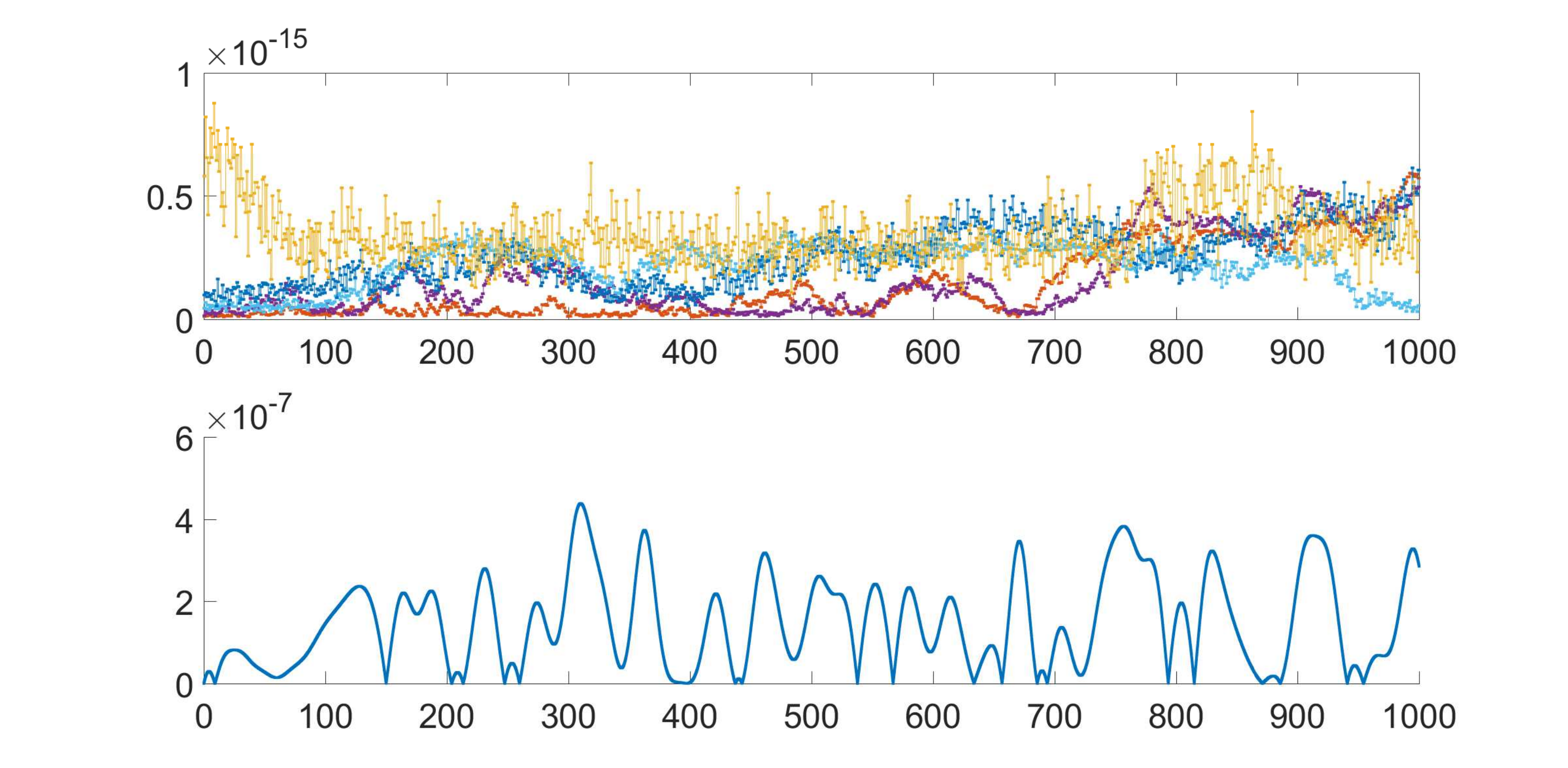}};

  \node[below=of img, node distance=0cm, yshift=4.1cm] {Evolution of error in Hamiltonian};
  \node[below=of img, node distance=0cm, yshift=7.2cm] {Evolution of error in Casimirs (eigenvalues)};
 \end{tikzpicture}
\end{minipage}

\caption{Evolution of errors for the generalized rigid body in $\SO(10)$. 
The Casimir functions correspond to the 10 eigenvalues (which occur in pairs). 
The Hamiltonian is given by \eqref{eq:ham_rb}.
The data for the simulation are given by: stepsize h=$0.1$; inertia tensor $\mathcal{I}=\mathrm{diag}(1,\ldots,10)$; initial conditions $(W_0)_{ij}=1/10$ if $i<j$, $(W_0)_{ij}=-1/10$ if $i>j$, $(W_0)_{ij}=0$ if $i=j$.}\label{fig:rb}
\end{figure}

The Casimirs of the generalized rigid body only constitutes $n$ first integrals, and they are therefore not enough to obtain the integrability.
The additional, non-Casimir first integrals are \emph{not} exactly preserved by our methods. 
However, from backward error analysis combined with KAM theory (see e.g.\ \cite{hlw}), one obtains that the additional integrals are nearly conserved (just as the Hamiltonian is nearly conserved).


\subsection{The (periodic) Toda lattice} \label{sec:todalattice}
Among Hamiltonian integrable systems the Toda lattice is perhaps the best known and most studied example. 
It represents a system of particles interacting pairwise with exponential forces. 
The equations of motion are determined by the Hamiltonian
\[
H(p,q) = \sum_{k=1}^n \left(\frac{1}{2}p_k^2+\exp(q_k-q_{k+1})\right),
\]
where $(q_i,p_i)$ are canonical coordinates of the $n$ particles. 
Independently, H\'enon~\cite{He1974}, Flaschka~\cite{Fl1974} and Manakov~\cite{man}
proved that the Toda system is integrable when $q_n=q_{n+1}$ (periodic boundary conditions). 
This is most easily seen by providing a Lax pair formulation. 
Indeed, by the following change of variables
\[
a_k =-\frac{1}{2}p_k, \hspace{2cm} b_k=\frac{1}{2}\exp\left(\frac{1}{2}(q_k-q_{k+1})\right),
\]
one obtains an equivalent isospectral flow
\begin{equation}\label{Toda}
\dot{L}=[B(L),L],
\end{equation}
where
\[
L=\left[\begin{matrix}
a_1 & b_1 & 0 & \ldots & b_n\\
b_1 & a_2 & b_2 & \ldots & 0\\
0 & b_2 & a_3 & \ldots & 0\\
\vdots & \vdots & \vdots & \ddots & \vdots  \\
b_n & 0 & 0 & \ldots & a_n
\end{matrix}\right],
\hspace{.5cm}
B(L)=\left[\begin{matrix}
0 & b_1 & 0 & \ldots & -b_n\\
-b_1 & 0 & b_2 & \ldots & 0\\
0 & -b_2 & 0 & \ldots & 0\\
\vdots & \vdots & \vdots & \ddots & \vdots \\
b_n & 0 & 0 & \ldots & 0
\end{matrix}\right].
\]
In these coordinates the canonical Hamiltonian is simply $H(L)=2\Tr(L^2)$.

So far, the mapping $B(\cdot)$ is defined only for matrices of the form $L$ above.
A natural extension
to any matrix $W\in\GL(n,\Cc)$ is
\begin{equation*}
	B(W)=
	\left[\begin{matrix}
0 & W_{12} & 0 & \ldots & -W_{1n}\\
-W_{21} & 0 & W_{23} & \ldots & 0\\
0 & -W_{32} & 0 & \ldots & 0\\
\vdots & \vdots & \vdots & \ddots & \vdots \\
W_{n1} & 0 & 0 & \ldots & 0
\end{matrix}\right].
\end{equation*}
Next, in order to extend \eqref{Toda} to a Hamiltonian isospectral flow on $\GL(n,\Cc)$ of the form in \eqref{eq:ham_isospectral}, we notice that we can take as a new Hamiltonian the function 
\begin{equation*}
	\widetilde{H}(W) = -\frac{1}{2}\Tr(W^\trans B(W)) + H(W) .	
\end{equation*}
The flow \eqref{eq:ham_isospectral} of this Hamiltonian then coincides with \eqref{Toda} for matrices of the form~$L$.
Indeed, since $B(W)\in \SO(n)$ when $W\in \Sym(n,\Rr)$, $\Tr(W^\trans B(W))=0$ for $W\in \Sym(n,\Rr)$.
Furthermore, $\nabla \widetilde{H}(W)^\trans=-B(W)$, when extended to any matrix $W$, since the linear mapping $B\colon\GL(n,\Cc)\to\GL(n,\Cc)$ is symmetric with respect to the Frobenius inner product.
Moreover, since the original Hamiltonian $H(W)$ is itself a Casimir function its gradient does not affect the dynamics.
We stress that the Hamiltonian structure of the extended system is \emph{different} from the original canonical Hamiltonian structure in the $q$ and $p$ variables.


We discretize the system for $n=4$ with the method in Theorem~\ref{thm_RK1} and with the Butcher tableau corresponding to the implicit midpoint method.
We use stepsize $h=0.1$ and initial conditions
\begin{equation*}
	a_i=b_i=(-1)^i, \qquad i=1,\ldots,4.
\end{equation*}
Since the $H(L)$ is one of the Casimir functions of the flow, it is preserved up to the iteration tolerance, as shown in Figure~\ref{fig:toda}.

We notice that in general our methods does not exactly preserve the zero entries of $L$ (although they are nearly preserved).
This is because the normalizer of the subspace of the symmetric matrices with the form of $L$ is not $J$-quadratic for $n>3$.


\begin{figure}
\begin{minipage}{1\textwidth}
\begin{tikzpicture}
 \node (img)  {\includegraphics[scale=0.3]{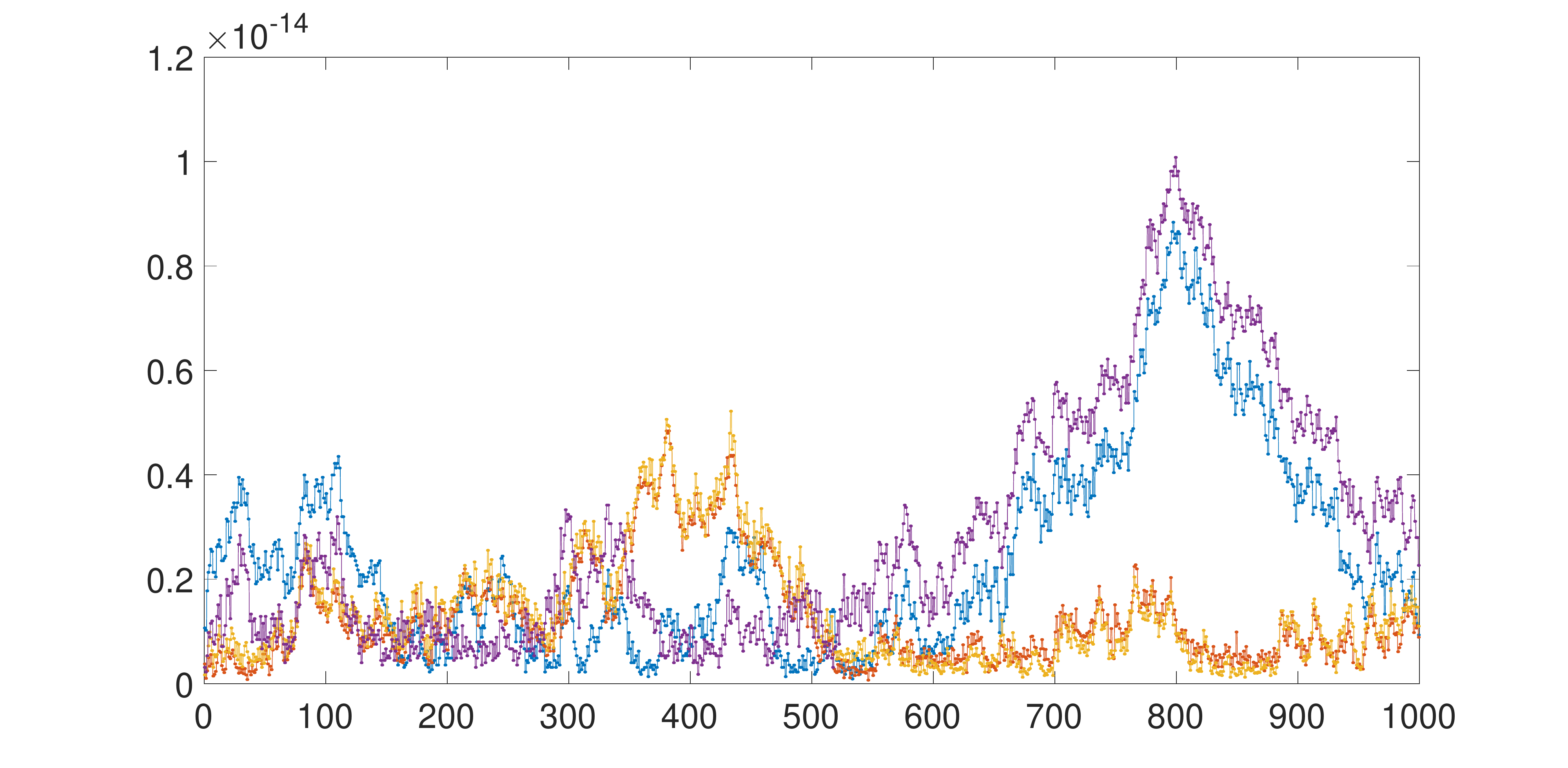}};
\centering
  \node[below=of img, node distance=0cm, yshift=7.2cm] {Evolution of error in Casimirs (eigenvalues)};
 \end{tikzpicture}
\end{minipage}

\caption{Error is Casimir functions for the periodic Toda Lattice with $n=4$. 
The data for the simulation are given by: stepsize $h=0.1$; initial conditions $a_i=b_i=(-1)^i$ for $i=1,\ldots,4$.}\label{fig:toda}
\end{figure}

\subsection{The Euler equations on a sphere} \label{sec:eulereq}
Let us briefly mention a beautiful approach for spatial discretization of the incompressible Euler equations on a sphere, which leads to a finite dimensional Hamiltonian isospectral flow.
For a full account we refer to the publication~\cite{MoVi2019}.\footnote{The Euler example is, in fact, the original motivation leading to the paper at hand.}

On the 2-sphere $\mathbb S^2$ the hydrodynamical Euler equations for an incompressible, inviscid, and homogeneous fluid can be formulated in terms of vorticity of the velocity vector.
The formulation is
\begin{equation}\label{euleq_vort1}
\left. \begin{array}{ll} 
	\dot{\omega} = \lbrace\Delta^{-1}\omega,\omega\rbrace \\
	 \omega(0)=\omega_0,
			\end{array}\right.
\end{equation} 
where the vorticity function $\omega$ is a smooth function on $\Ss^2$ with zero mean, $\Delta^{-1}$ is the inverse of the Laplace--Beltrami operator on the sphere (which is invertible because the kernel consists only of the constant functions), and $\lbrace\cdot,\cdot\rbrace$ is the Poisson bracket between functions.
This system is an infinite dimensional Lie--Poisson system on the dual of the algebra of divergence free vector field on $\Ss^2$.
The Casimir functions are given by
\begin{equation*}
	C(\omega) = \int_{\Ss^2} f(\omega(x))\,dx
\end{equation*}
where $f\colon\Rr\to\Rr$ is any smooth function.
Thus, there are infinitely many independent first integrals. 
(Although not enough integrals for the system to be integrable.)

In \emph{geometric quantization theory} (cf.~\cite{BaWe1997}) the space of smooth functions is replaced by a Hilbert space of linear operators, and the Poisson bracket $\{\cdot,\cdot\}$ is replaced by the commutator $[\cdot,\cdot]$.
The aim is to obtain a construction such that, in some sense, $[\cdot,\cdot]$ approximates $\{\cdot,\cdot\}$.
In his PhD thesis, Hoppe~\cite{hopPhD} gave an explicit quantization of $(C^\infty(\mathbb S^2),\{\cdot,\cdot\})$ in terms of the finite dimensional Lie algebras $\SU(n,\Cc)$, such that $[\cdot,\cdot] \to \{\cdot,\cdot\}$ as $n\to\infty$.
This naturally leads to a spatial discretization of the vorticity equation~\eqref{euleq_vort1} by simply replacing $\{\cdot,\cdot\}$ by $[\cdot,\cdot]$ and then working out what the corresponding discrete Laplacian $\Delta_n$ should be.
An explicit formula for $\Delta_n$ was given by Hoppe and Yau~\cite{hopyau}.
The resulting spatially discretized equations thus become


\begin{equation}\label{euleq_vort2}
\left. \begin{array}{ll} 
	\dot{W} = [\Delta^{-1}_n W ,W] \\
	 W(0)=W_0,
			\end{array}\right.
\end{equation} 
where $W\in\SU(n)$.
This is an isospectral Hamiltonian system with respect to the Hamiltonian $H(W)=\frac{1}{2}\Tr((\Delta^{-1}_n W)^\trans W)$. 

In our paper \cite{MoVi2019} we develop and further explore a fully discrete version of \eqref{euleq_vort2} based on the isospectral methods in this paper.
We thus obtain a discrete flow that preserves all the underlying structure of the Euler equations: conservation of Casimirs and the Lie--Poisson structure.
In particular, conservation of Casimirs is essential for numerical studies of the long-time behavior of \eqref{euleq_vort1} and of the mechanisms behind the \emph{inverse energy cascade} exclusive to 2D turbulence.



\subsection{Point vortices on a sphere and the Heisenberg spin chain} \label{sub:pointvortices}

As stated above, the symplectic isospectral Runge--Kutta methods are readily extended to product spaces.
For example we can deal with $(\SU(2)^*)^n$, where $n$ is the number of vortices or spin particles in the point-vortices equation~\cite{New2001} or, respectively, the Heisenberg spin chain~\cite{mmv}.

The Hamiltonian for point-vortex dynamics is
\[
H(W_1,W_2,\ldots,W_n)= -\frac{1}{4\pi}\sum_{\substack{i,j=1 \\ i<j}}^n \Gamma_i\Gamma_j \log\left(1-\dfrac{\Tr(W_i^\trans W_j)}{\|W_i\|^2\|W_j\|^2}\right),
\]
where $W_1,\ldots,W_n$ thought of a vectors in $\Rr^3$ are the positions of the point vortices and $\Gamma_1,\ldots,\Gamma_n$ the respective strengths.
For the Heisenberg spin chain the Hamiltonian is
\[
H(W_1,W_2,\ldots,W_n)= \sum_{i=1}^n \Tr(W_i^\trans W_{i+1}),
\]
where $W_1,\ldots,W_n$ are the spins of the particles and $W_1=W_{n+1}$, see for example~\cite{mmv}.

For these systems a new first integral arises, due to the $SU(2)$ symmetry of the Hamiltonians
\[
	H(GW_1G^{-1},GW_2G^{-1},\ldots,GW_nG^{-1})=H(W_1,W_2,\ldots,W_n),
\]
for any $G\in SU(2)$. 
The corresponding first integrals are given by the (weighted) sum of the vortices/spins
\[
M(W_1,W_2,\ldots,W_n)=\sum_{i=1}^n\Gamma_iW_i.
\]

We use the midpoint based numerical scheme of Theorem~\ref{thm_RK1} for the point-vortex Hamiltonian with $n=4$ and stepsize $h=0.1$.
The initial vortex positions are
\begin{equation*}
	x_1=[1 \ 0 \  0],x_2=[-1 \ 0 \ 0],x_3=[0 \ 1 \ 0],x_4=[0 \ -1 \ 0].
\end{equation*}
As before, the Casimirs are conserved and the Hamiltonian is nearly conserved.
In addition, the extra integral $M$ is conserved up to machine precision, as can be seen in Figure~\ref{fig:pointv}.

\begin{figure}
\begin{minipage}{1\textwidth}
\begin{tikzpicture}
 \node (img)  {\includegraphics[scale=0.3]{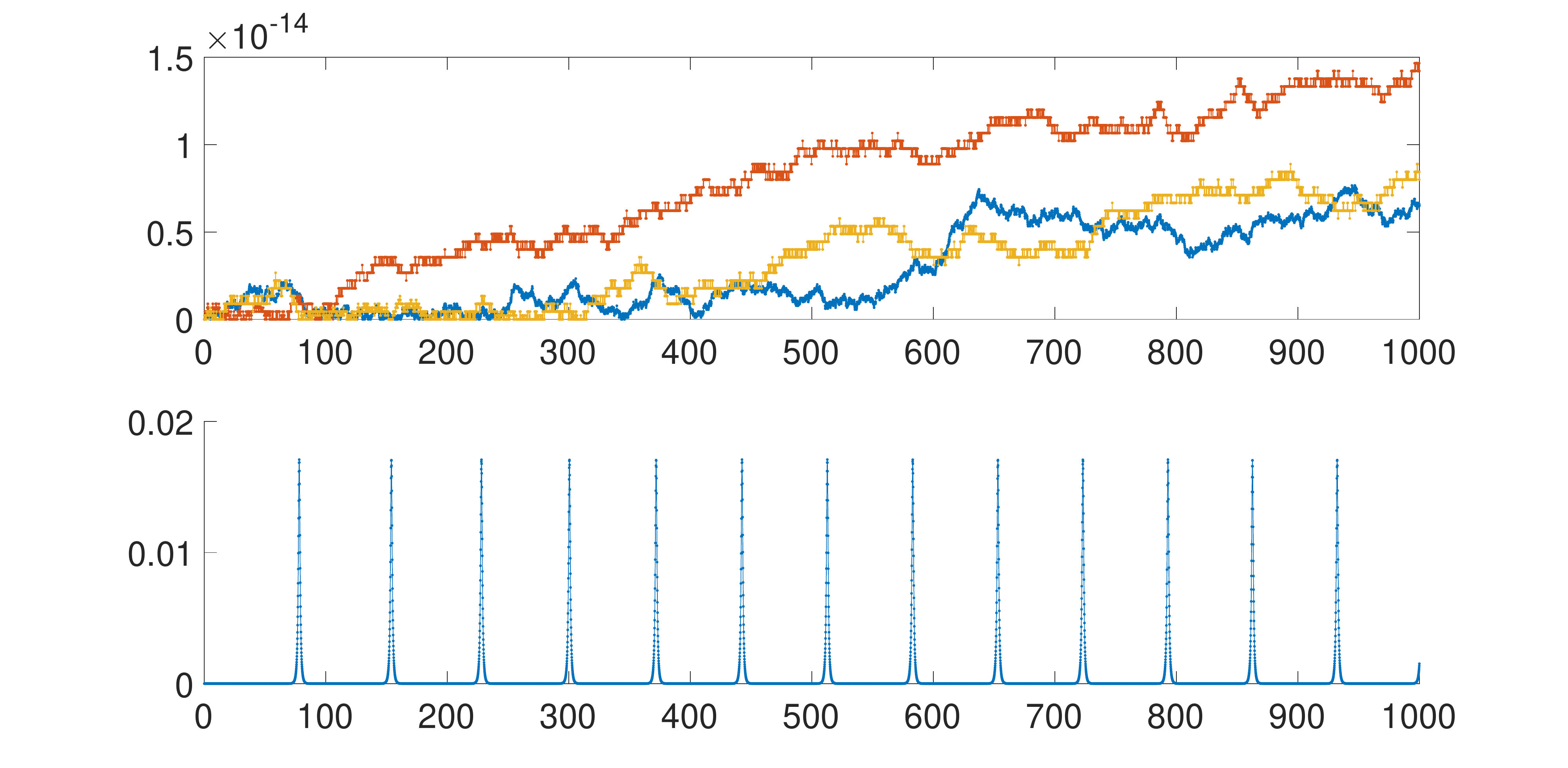}};
\centering
  \node[below=of img, node distance=0cm, yshift=4.2cm] {Hamiltonian variation};
  \node[below=of img, node distance=0cm, yshift=7.3cm] {Momentum variation};
 \end{tikzpicture}
\end{minipage}

\caption{Evolution of errors for 4 point vortices on a sphere. Upper: three compontents of the momentum $M$. Lower: Hamiltonian.
The data for the simulation are given by: stepsize $h=0.1$; initial conditions $x_1=[1 \ 0 \  0],x_2=[-1 \ 0 \ 0],x_3=[0 \ 1 \ 0],x_4=[0 \ -1 \ 0]$.}\label{fig:pointv}
\end{figure}

\subsection{The Bloch--Iserles flow}\label{sec:BloIse}
Given $N\in\SO(n)$, the Bloch--Iserles flow \cite{BloIse2006} on $\Sym(n,\Rr)$ is
\begin{equation*}
	\dot W = [W^2,N].
\end{equation*}
It can be cast as an isospectral flow \eqref{iso_gen} on $\Sym(n,\Rr)$ with $B(W) = NW+WN$.
Its interest lies in its integrable structure, which is fundamentally different from that of the Toda lattice and the generalized rigid body.

The Bloch--Iserles flow can be extended to a Hamiltonian isospectral flow on $\GL(n,\Rr)^*\simeq \GL(n,\Rr)$ such that $\Sym(n,\Rr)$ is an invariant subspace, just as the Toda flow in Section~\ref{sec:todalattice} above.
The Hamiltonian for this is 
\[
H(W) = \Tr(W^2 N).
\]


We give a numerical example with $n=3$ and, again, the second order midpoint based scheme of Theorem~\ref{thm:main_isosyrk} with stepsize $h=0.1$. 
The matrix $N$ and the initial conditions are
\[
N=\frac{1}{\sqrt{2}}\begin{bmatrix}
    0 & 1 & 0  \\
    -1 & 0 & 1 \\
    0 & -1 & 0
\end{bmatrix} \qquad\text{and}\qquad
W_0=
\begin{bmatrix}
    
    0.0163 &   0.3928 &   0.2415 \\
    0.3928 &   0.1501  &  0.3443 \\
    0.2415 &   0.3443  &  0.6603
\end{bmatrix}.
\]
The evolution of energy and the Casimirs (eigenvalues) are given in \autoref{fig:BI1}.
The integrable structure of the flow is revealed as quasi-periodicity in projections of the phase diagram, as seen in \autoref{fig:BI2}.


\begin{figure}[h!]
\begin{minipage}{1\textwidth}
\begin{tikzpicture}
 \node (img)  {\includegraphics[scale=0.3]{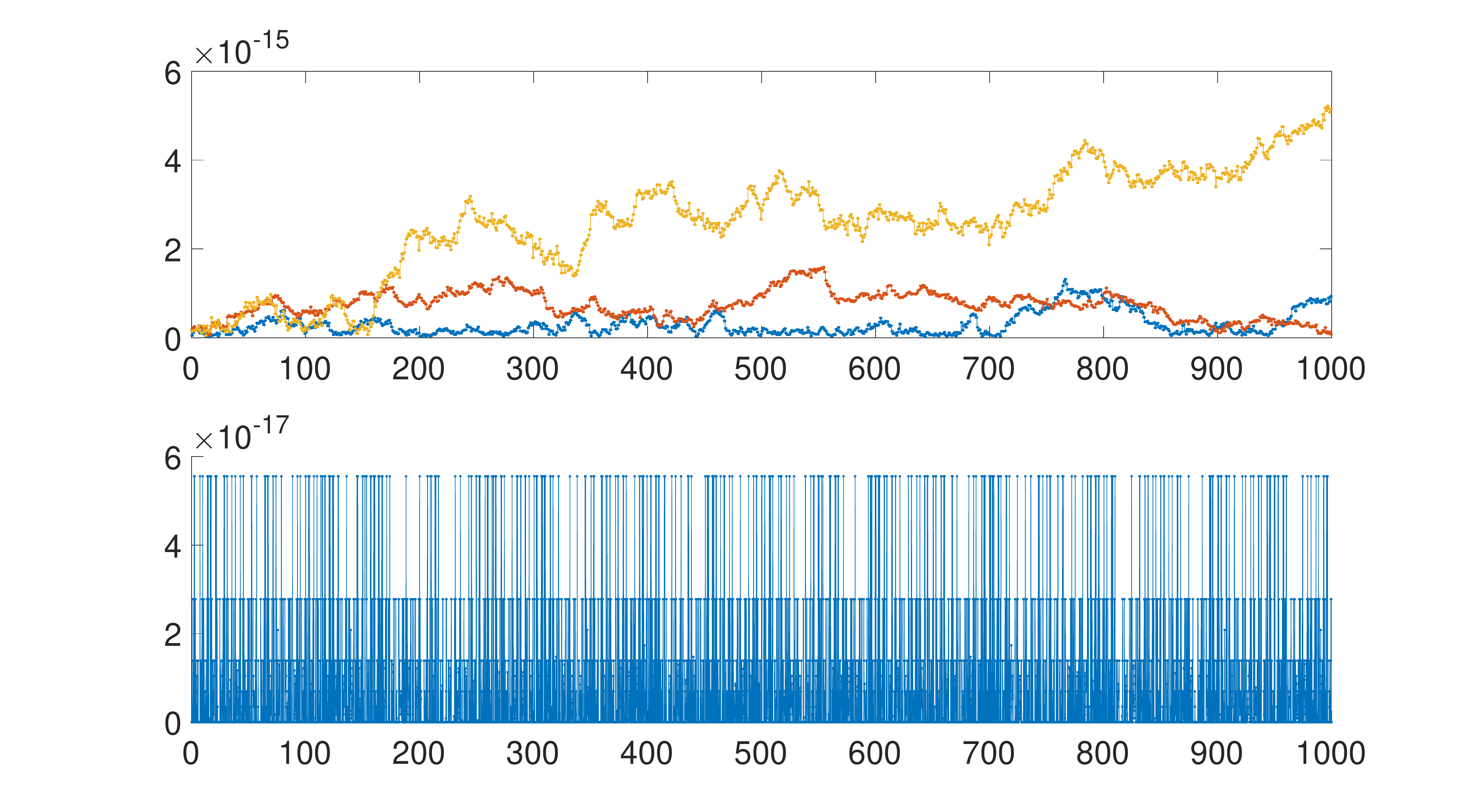}};
\centering

  \node[below=of img, node distance=0cm, yshift=4.5cm] {Evolution of error in Hamiltonian};
  \node[below=of img, node distance=0cm, yshift=7.8cm] {Evolution of error in Casimirs (eigenvalues)};
 \end{tikzpicture}
\end{minipage}
\caption{Casimir and Hamiltonian variation in time $T=100$, for the Bloch--Iserles flow in $\mbox{Sym}(3)$ and time-step $h=0.1$.}\label{fig:BI1}
\end{figure}

\begin{figure}[h!]
\begin{tikzpicture}
 \node (img) at (0,0)  {\includegraphics[width=0.8\textwidth]{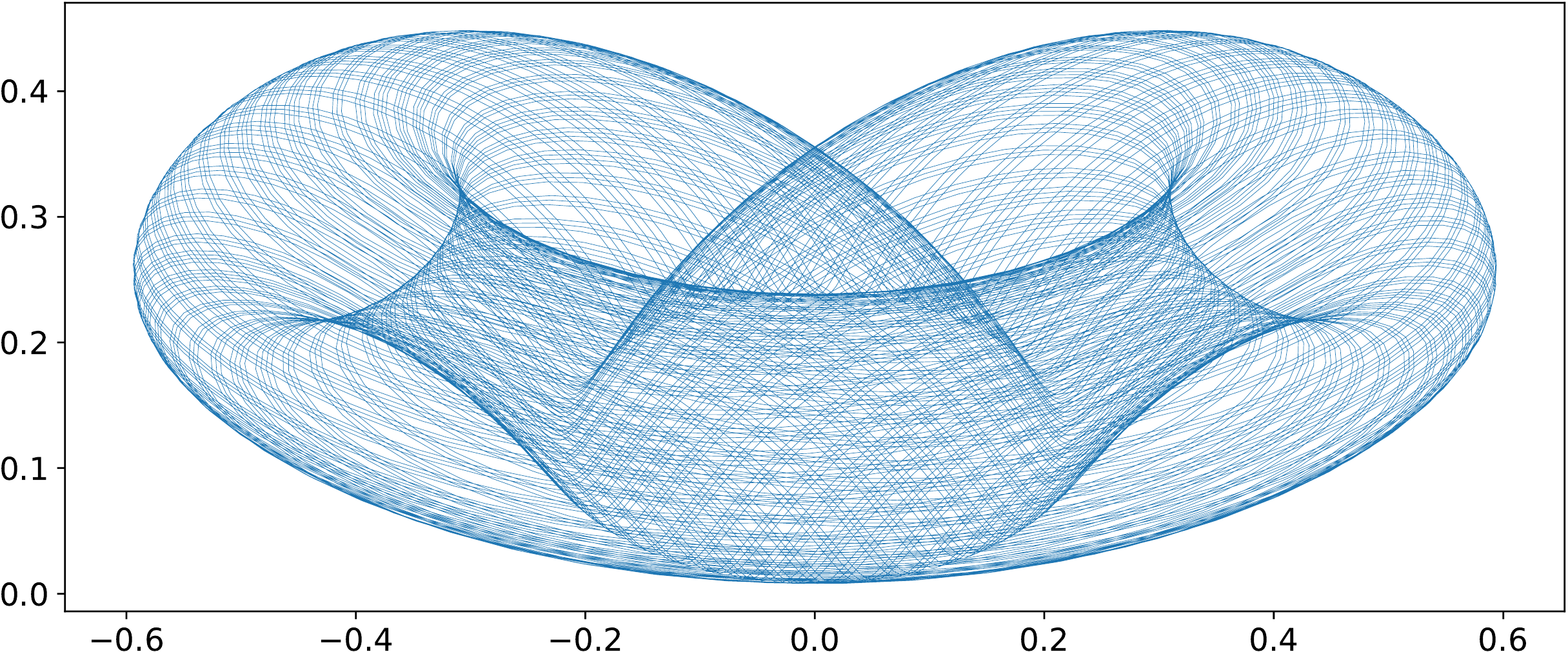}};
  \node[above=of img,yshift=-7ex] {Phase space portrait projected to the $(W_{12},W_{13})$-plane};
 \end{tikzpicture}
\caption{Projected phase space portrait for the Bloch--Iserles system. The resulting diagram reveals quasi-periodic motion on embedded tori as expected for integrable systems.}\label{fig:BI2}
\end{figure}

\subsection{The Toeplitz inverse eigenvalue problem}\label{sec:toeplitz}

In this section we demonstrate that the methods introduced can be applied also to non-Hamiltonian systems.
To this end, consider Chu's flow on symmetric real matrices, which is of the form~\eqref{iso_gen} with
\[
B(W)=\left[\begin{matrix}
0 & W_{1,1}-W_{2,2} & W_{1,2}-W_{2,3} & \ldots & W_{1,n-1}-W_{2,n}\\
W_{2,2}-W_{1,1} & 0 & W_{2,2}-W_{3,3} & \ldots & W_{2,n-1}-W_{3,n} \\
W_{3,2}-W_{2,1} & W_{3,3}-W_{2,2} & 0 & \ldots & W_{3,n-1}-W_{4,n} \\
\vdots & \vdots & \vdots & \vdots & \vdots \\
W_{n,2}-W_{n-1,1} & W_{n,3}-W_{n-1,2} &  W_{n,4}-W_{n-1,3} & \ldots & 0 \\
\end{matrix}\right]
\]
Notice that if $W\in \Sym(n,\Rr)$ then $B(W)\in\mathfrak{so}(n)$. 

The Toeplitz inverse eigenvalue problem reads as follows.
Given a certain set of eigenvalues, find a symmetric Toeplitz matrix with that prescribed spectra (recall that a Toeplitz matrix is a matrix with constant elements on the diagonals).
In \cite{Lan1994}, H.J. Landau established that, for any given spectra, there exists a symmetric Toeplitz matrix with those eigenvalues.
Towards a practical algorithm, Chu~\cite{Ch1994} instead proved that fixed points of the isospectral flow with $B(W)$ as above are symmetric Toeplitz matrices, provided the eigenvalues are distinct.

Chu's flow is particularly interesting from a numerical point of view because there exist periodic orbits. 
Thus, the flow does not always converge to a fixed point.
However, the periodic orbits are unstable and because of the floating point drift
in numerical methods Chu's flow in practice always converge to a symmetric Toeplitz matrix when the starting point has distinct eigenvalues \cite{webbphd}. 

A qualitatively better simulation of Chu's flow, which preserves the periodic orbits, can be obtained by restriction to centrosymmetric matrices. 
A matrix is said to be centrosymmetric if it is invariant with respect to a rotation of the components of $\pi$ grade.
In other words a matrix $A$ is centrosymmetric if
\[
A E-EA=0,
\]
where
 \[E=\left[\begin{matrix}
0 & 0 & \ldots & 0 & 1\\
0 & 0 & \ldots & 1 & 0\\
\vdots & \vdots & \ddots & \vdots & \vdots \\
0 & 1 & \ldots & 0 & 0\\
1 & 0 & \ldots & 0 & 0
\end{matrix}\right].
\]

The set of centrosymmetric matrices of dimension $n$ is a Lie algebra which we denote by $\mbox{Centro}(n)$.
In particular we have that the symmetric Toeplitz matrices are centrosymmetric and $B(W)$ is centrosymmetric when $W$ is symmetric and centrosymmetric \cite{webbphd}. 
Therefore, for the Toeplitz problem, Chu's flow can be restricted to the symmetric-centrosymmetric matrices. 
With this restriction the periodic orbits are numerically preserved, 
and therefore the simulation of the flow is more realistic. In fact, in order to avoid the drifting out from the periodic orbits, it is necessary to respect the centrosymmetric symmetry of the original flow in the discrete approximation.

If $S$ denotes the linear subspace of symmetric-centrosymmetric matrices, then by Theorem~\ref{thm1} the isospectral symplectic Runge--Kutta methods descend to an isospectral integrator on the symmetric-centrosymmetric matrices, provided that $B(W)$ is in the normalizer of $S$, which is $\SO(n)\cap\mbox{Centro}(n)$.

We use the midpoint based numerical scheme of Theorem~\ref{thm_RK1} for Chu's flow with $n=4$ and stepsize $h=0.1$.
The initial conditions, proposed in \cite{webbphd}, are
 \[
 W_0=\left[\begin{matrix}
0.1336 & 0 & 0 & 0.5669\\
0 & -0.1336 & 0.378 & 0\\
0 & 0.378 & -0.1336 & 0 \\
0.5669 & 0 & 0 & 0.1336
\end{matrix}\right]
\]
Figures~\ref{fig:chu1}--\ref{fig:chu2} show the difference of the behaviour of the flow with and without the restriction to the centrosymmetric matrices, confirming the same predictions presented in \cite{webbphd}.
\begin{figure}
\begin{minipage}{1\textwidth}
\begin{tikzpicture}
 \node (img)  {\includegraphics[scale=0.3]{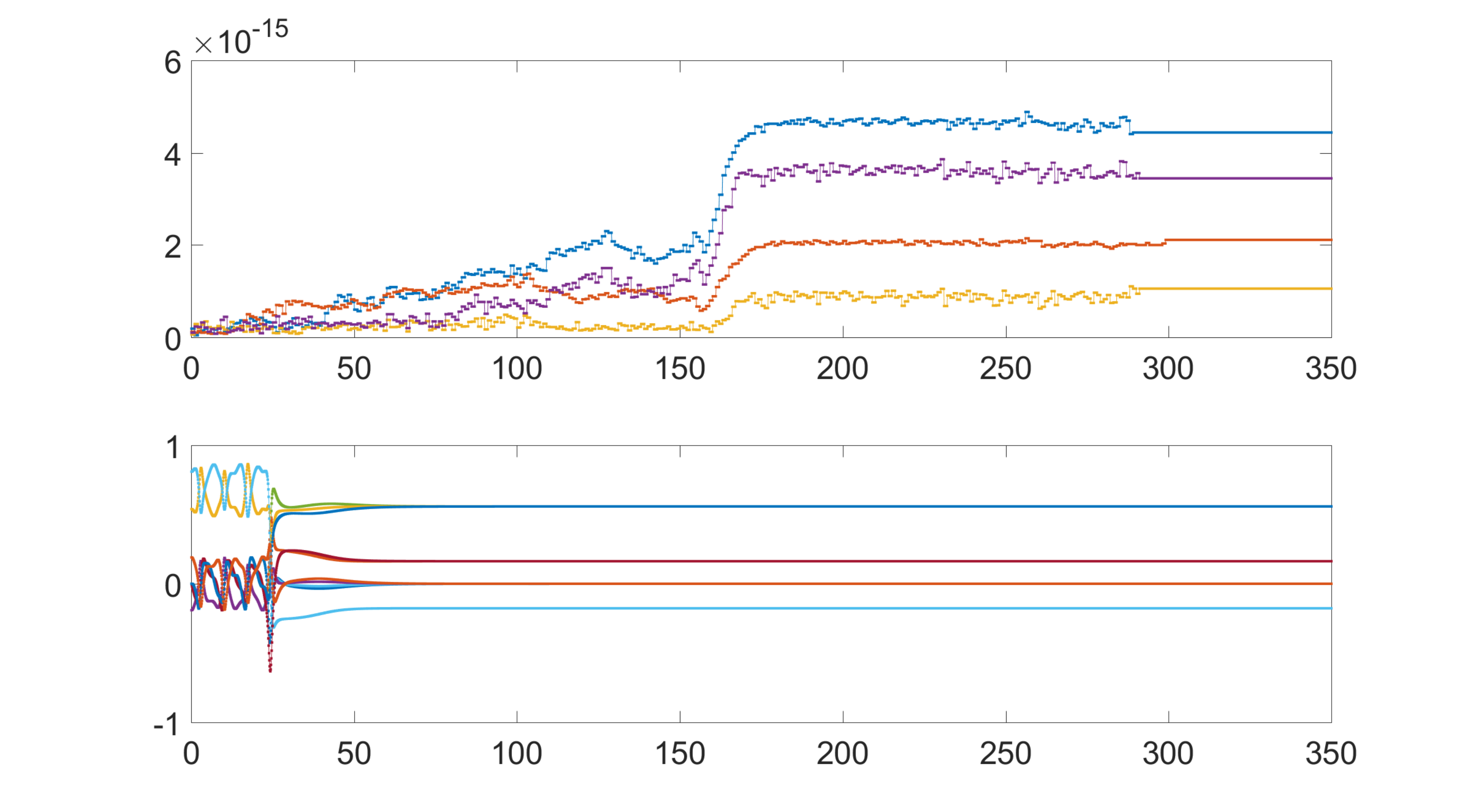}};
\centering
  \node[below=of img, node distance=0cm, yshift=4.7cm] {Evolution of components} ;
  \node[below=of img, node distance=0cm, yshift=7.9cm]{Evolution of error in Casimirs (eigenvalues)};
 \end{tikzpicture}
\end{minipage}
\caption{Numerical simulation of Chu's flow without forcing centrosymmetry of $B(W)$.}\label{fig:chu1}
\end{figure}

\begin{figure}
\begin{minipage}{1\textwidth}
\begin{tikzpicture}
 \node (img)  {\includegraphics[scale=0.3]{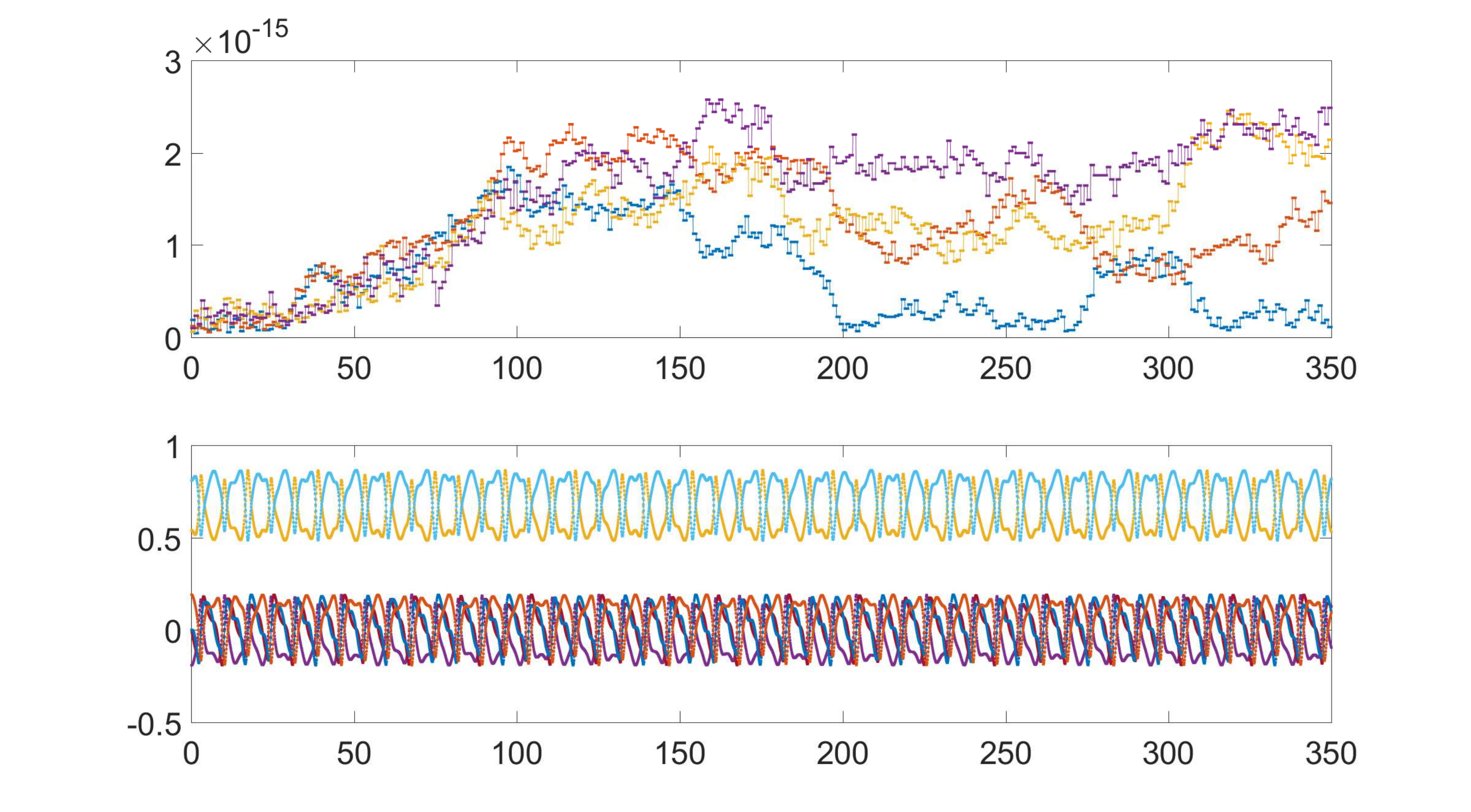}};
\centering
  \node[below=of img, node distance=0cm, yshift=4.7cm] {Evolution of components} ;
  \node[below=of img, node distance=0cm, yshift=7.9cm]{Evolution of error in Casimirs (eigenvalues)};
 \end{tikzpicture}
\end{minipage}
\caption{Numerical simulation of Chu's flow forcing centrosymmetry of $B(W)$.}\label{fig:chu2}
\end{figure}

\subsection{The Brockett flow}
Another example of a non-Hamiltonian isospectral flow is the \textit{Brockett flow}, or \textit{double bracket flow}
\begin{equation}\label{eq:Brockett}
\dot{W} = [[N,W],W],
\end{equation}
where $N$ and $W$ are $n\times n$ self-adjoint complex matrices.
In \cite{Bro1991}, Brockett shows that for a diagonal $N$  with distinct entries and $W_0$ a self-adjoint matrix with distinct eigenvalues, $W(t)$ converges exponentially fast to a diagonal matrix with the eigenvalues sorted accordingly to the order of the entries of $N$. 
There are interesting connections between the Brockett flow and information theory.
Indeed, the Brockett flow can be viewed as a gradient flow, with respect to the Fisher--Rao information metric, of a relative entropy functional on the statistical manifold of multivariate Gaussian distributions~\cite[Sec.~3.4.3]{Mo2017}.

We apply the isospectral midpoint method with $h=0.1$.	
In Figure~\ref{fig:eig_Brockett} we plot the eigenvalues and the components variation for a randomly generated self-adjoint initial matrix $W_0$ of dimension $3\times 3$ and $N=\operatorname{diag}(1,2,3)$. 
Figure~\ref{fig:eig_Brockett} displays the exponential convergence to a similar diagonal matrix.

\begin{figure}[h!]
\begin{minipage}{1\textwidth}
\begin{tikzpicture}
 \node (img)  {\includegraphics[scale=0.3]{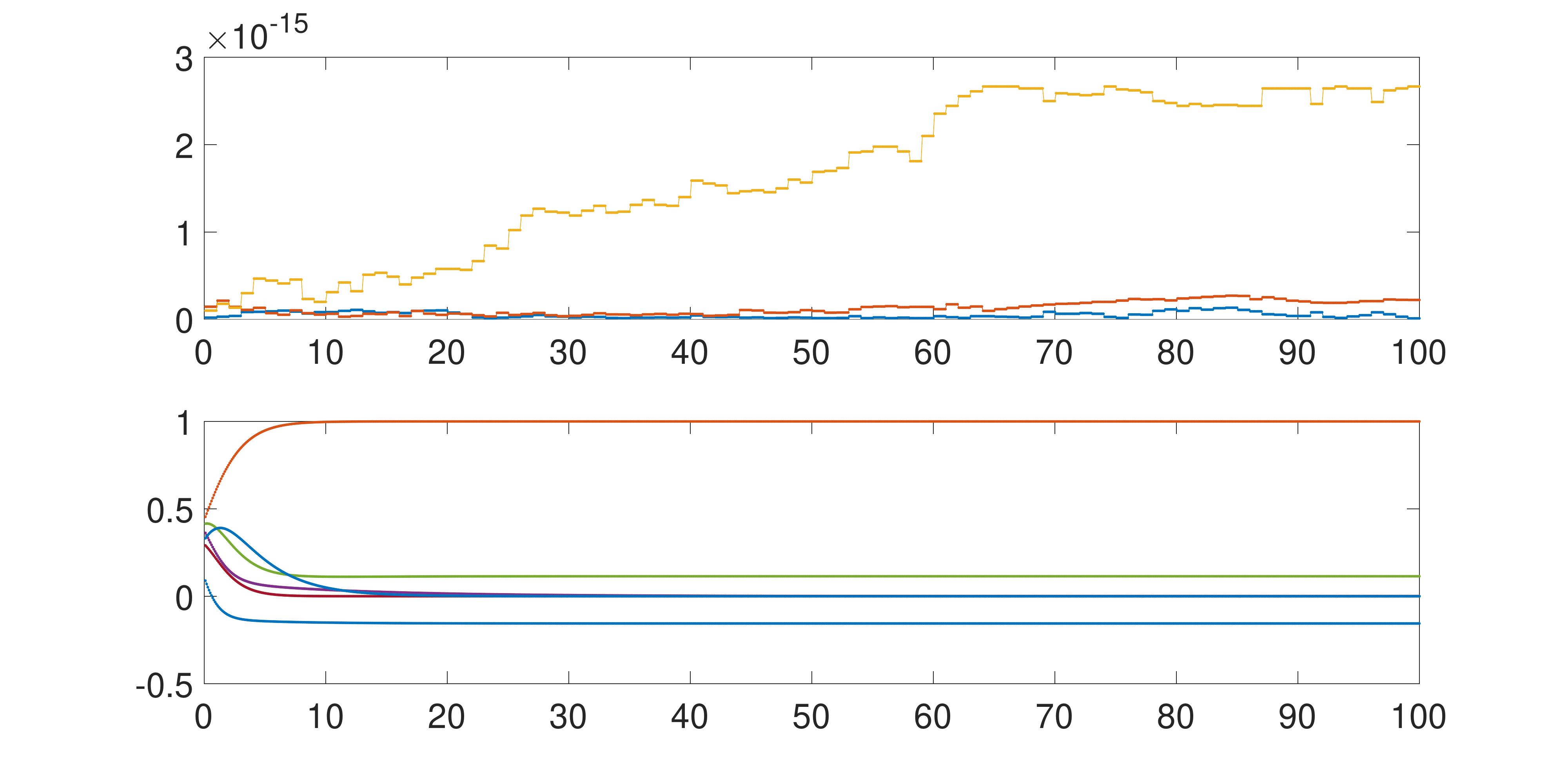}};
\centering
   \node[below=of img, node distance=0cm, yshift=7.4cm] {Casimir (eigenvalues) variation in time};
    \node[below=of img, node distance=0cm, yshift=4.3cm] {Evolution of components} ;
 \end{tikzpicture}
\end{minipage}
\caption{Eigenvalues (above) and components (below) for the Brockett flow \eqref{eq:Brockett} solved with the midpoint IsoSyRK method, with time-step $h=0.1$. 
The initial condition $W_0$ is a randomly generated self-adjoint matrix of dimension $3\times 3$ and $N=\operatorname{diag}(1,2,3)$.}\label{fig:eig_Brockett}
\end{figure}

\bibliographystyle{amsplain}
\bibliography{biblio2}

 \end{document}